\newcommand{\arxiv}[1]{\href{http://arxiv.org/pdf/#1}{arXiv:#1}}
\makeatletter\patchcmd{\@startsection}{\@afterindenttrue}{\@afterindentfalse}{}{}\makeatother    %omit indentation of the first paragraph of a section
\patchcmd{\section}{\scshape}{\bfseries}{}{}\makeatletter\renewcommand{\@secnumfont}{\bfseries}\makeatother           %boldface section and subsection titles (no caption), including numbers
\providecommand \@dotsep{5} \def\listtodoname{List of Todos} \def\listoftodos{\@starttoc{tdo}\listtodoname} \makeatother %\todo{} for margin notes, supress in pdf with option [disable]
\tikzset{vertex/.style={rectangle,rounded corners=5pt,draw=black,text height=1.0ex, text depth=0.0ex,font=\footnotesize}}
\DeclareSymbolFont{cmletters}{OML}{cmm}{m}{it}                                     %Replace and add symbols to the fonts
\DeclareSymbolFont{cmsymbols}{OMS}{cmsy}{m}{n}
\DeclareSymbolFont{cmlargesymbols}{OMX}{cmex}{m}{n}
\DeclareMathSymbol{\myjmath}{\mathord}{cmletters}{"7C}     \let\jmath\myjmath %Defining the missing commands: \jmath, \amalg and \coprod
\DeclareMathSymbol{\myamalg}{\mathbin}{cmsymbols}{"71}     
\DeclareMathSymbol{\mycoprod}{\mathop}{cmlargesymbols}{"60}\let\coprod\mycoprod
\DeclareMathSymbol{\myalpha}{\mathord}{cmletters}{"0B}     \let\alpha\myalpha %Greek letters from Computer Modern (since the ones from mathptmx are too large)
\DeclareMathSymbol{\mybeta}{\mathord}{cmletters}{"0C}      \let\beta\mybeta
\DeclareMathSymbol{\mygamma}{\mathord}{cmletters}{"0D}     \let\gamma\mygamma
\DeclareMathSymbol{\mydelta}{\mathord}{cmletters}{"0E}     \let\delta\mydelta
\DeclareMathSymbol{\myepsilon}{\mathord}{cmletters}{"0F}   \let\epsilon\myepsilon
\DeclareMathSymbol{\myzeta}{\mathord}{cmletters}{"10}      \let\zeta\myzeta
\DeclareMathSymbol{\myeta}{\mathord}{cmletters}{"11}       \let\eta\myeta
\DeclareMathSymbol{\mytheta}{\mathord}{cmletters}{"12}     \let\theta\mytheta
\DeclareMathSymbol{\myiota}{\mathord}{cmletters}{"13}      \let\iota\myiota
\DeclareMathSymbol{\mykappa}{\mathord}{cmletters}{"14}     \let\kappa\mykappa
\DeclareMathSymbol{\mylambda}{\mathord}{cmletters}{"15}    \let\lambda\mylambda
\DeclareMathSymbol{\mymu}{\mathord}{cmletters}{"16}        \let\mu\mymu
\DeclareMathSymbol{\mynu}{\mathord}{cmletters}{"17}        \let\nu\mynu
\DeclareMathSymbol{\myxi}{\mathord}{cmletters}{"18}        \let\xi\myxi
\DeclareMathSymbol{\mypi}{\mathord}{cmletters}{"19}        \let\pi\mypi
\DeclareMathSymbol{\myrho}{\mathord}{cmletters}{"1A}       \let\rho\myrho
\DeclareMathSymbol{\mysigma}{\mathord}{cmletters}{"1B}     \let\sigma\mysigma
\DeclareMathSymbol{\mytau}{\mathord}{cmletters}{"1C}       \let\tau\mytau
\DeclareMathSymbol{\myupsilon}{\mathord}{cmletters}{"1D}   \let\upsilon\myupsilon
\DeclareMathSymbol{\myphi}{\mathord}{cmletters}{"1E}       \let\phi\myphi
\DeclareMathSymbol{\mychi}{\mathord}{cmletters}{"1F}       \let\chi\mychi
\DeclareMathSymbol{\mypsi}{\mathord}{cmletters}{"20}       \let\psi\mypsi
\DeclareMathSymbol{\myomega}{\mathord}{cmletters}{"21}     \let\omega\myomega
\DeclareMathSymbol{\myvarepsilon}{\mathord}{cmletters}{"22}\let\varepsilon\myvarepsilon
\DeclareMathSymbol{\myvartheta}{\mathord}{cmletters}{"23}  \let\vartheta\myvartheta
\DeclareMathSymbol{\myvarpi}{\mathord}{cmletters}{"24}     \let\varpi\myvarpi
\DeclareMathSymbol{\myvarrho}{\mathord}{cmletters}{"25}    \let\varrho\myvarrho
\DeclareMathSymbol{\myvarsigma}{\mathord}{cmletters}{"26}  \let\varsigma\myvarsigma
\DeclareMathSymbol{\myvarphi}{\mathord}{cmletters}{"27}    \let\varphi\myvarphi
\theoremstyle{plain}
\newtheorem{thm}{Theorem}[section]
\newtheorem{cor}[thm]{Corollary}
\newtheorem{lemma}[thm]{Lemma}
\newtheorem{prop}[thm]{Proposition}
\newtheorem*{thmA}{Main theorem}
\theoremstyle{definition}
\newtheorem{df}[thm]{Definition}
\newtheorem{ex}[thm]{Example}
\newtheorem{rem}[thm]{Remark}
\newtheorem*{rem*}{Remark}
\newtheorem*{remA}{Remark 1}
\newtheorem*{remB}{Remark 2}
\newtheorem*{remC}{Remark 3}
\newtheorem*{remD}{Remark 4}
\newcommand{\ses}[3]{0\rightarrow #1\rightarrow #2\rightarrow#3\rightarrow 0}
\newcommand\A{{\mathbb A}}
\newcommand\C{{\mathbb C}}
\newcommand\F{{\mathbb F}}
\newcommand\N{{\mathbb N}}
\renewcommand\P{{\mathbb P}}
\newcommand\Z{{\mathbb Z}}
\newcommand\cB{{\mathcal B}}
\newcommand{\Sc}[2]{\langle #1,#2\rangle}
\newcommand\ud{{\underline{d}}}
\newcommand\ue{{\underline{e}}}
\newcommand\udim{{\underline{\dim}\, }}
\newcommand\Rep{\mathrm{Rep}}
\newcommand\supp{\mathrm{supp}}
\DeclareMathOperator{\Gr}{Gr}
\DeclareMathOperator{\Hom}{Hom}
\DeclareMathOperator{\Ext}{Ext}
\title[Representation type via quiver Grassmannians]{Representation type via Euler characteristics and singularities of quiver Grassmannians}
\author{Oliver Lorscheid}
\address{Instituto Nacional de Matem\'atica Pura e Aplicada, Rio de Janeiro, Brazil}
\email{oliver@impa.br}
\author{Thorsten Weist}
\address{Bergische Universit\"at Wuppertal, Gau\ss str.\ 20, 42097 Wuppertal, Germany}
\email{weist@uni-wuppertal.de}
\begin{document}

\begin{abstract}
 In this text, we characterize the representation type of an acyclic quiver by the properties of its associated quiver Grassmannians. This characterization utilizes and extends known results about singular quiver Grassmannians and cell decompositions into affine spaces.

 While all quiver Grassmannians for indecomposable representations of quivers of finite representation types $A$ and $D$ are smooth and admit cell decompositions, it turns out that all quiver Grassmannians for indecomposable representations of quivers of tame types $A$ and $D$ admit cell decompositions, but some of these quiver Grassmannians are singular (even as varieties). A quiver is wild if and only if there exists a quiver Grassmannian with negative Euler characteristic.
\end{abstract}

\maketitle
%{\ \vspace{-290pt}\\ \flushright\tiny\bf\ Version 1.0\\ \today\\}\vspace{245pt}

%\begin{small} \tableofcontents \end{small} 

%%%%%%%%%%%%%%%%%%%%%%%%%%%%%%%%%%%%%%%%%%%%%%%%%%%%%%%%%%%%%%%%%%%%%%%%%%%%%%%%%%%%%%%%%%%%%%%%%%%%%%%%%%%%%%%%%%%%%%%%%%%%%%%%%%%%%%%%%%%%%%%%%%%%%%%%%%%%%%%%%%%%%
%%%%%%%%%%%%%%%%%%%%%%%%%%%%%%%%%%%%%%%%%%%%%%%%%%%%%%%%%%%%%%%%%%%%%%%%%%%%%%%%%%%%%%%%%%%%%%%%%%%%%%%%%%%%%%%%%%%%%%%%%%%%%%%%%%%%%%%%%%%%%%%%%%%%%%%%%%%%%%%%%%%%%

\section*{Introduction}
\label{intro}

\subsection*{Motivation}
In this paper, we characterize the representation type of an acyclic quiver in terms of the geometry of the associated quiver Grassmannians. This characterization draws on previous results in the literature, and the proof of this characterization finds its completion in this text. 

Quiver Grassmannians have been studied intensely since their relevance for cluster algebras was revealed. Namely, in case of an acyclic quiver $Q$, the Caldero-Chapoton formula expresses the cluster variables of $Q$ in terms of the Euler characteristics of the associated quiver Grassmannians (see \cite{cc}, \cite{dwz} and \cite{dwz2}).

This discovery started an active search for methods to determine these Euler characteristics and to prove their positivity under suitable assumptions. To highlight some developments, Cerulli Irelli (\cite{Cerulli11}) and subsequently Haupt (\cite{Haupt12}) use torus actions to compute Euler characteristics in terms of torus fixed points. This method leads to satisfactory results for quivers of (extended) Dynkin type $A$. In particular, the Euler characteristics are always nonnegative in type $A$. 

The authors (\cite{LW15a}, \cite{LW15b}) establish cell decompositions into affine spaces for quiver Grassmannians of (extended) Dynkin type $D$. Such a cell decomposition implies that the cohomology is concentrated in even degrees and therefore the Euler characteristic is nonnegative. 

For a while, it was an open problem, which class of projective varieties could be realized as quiver Grassmannians in general. Reineke (\cite{Reineke13}) and Hille (\cite{Hille15}) settle this question: every projective scheme occurs as the quiver Grassmannian of some wild quiver. 

In this paper, we extend the above mentioned results to a classification of the representation type of an acyclic quiver in terms of geometric properties of the associated quiver Grassmannians.

\subsection*{Definition}
Let $Q$ be a quiver, $X$ a finite dimensional complex representation of $Q$ and $\ue$ a dimension vector for $Q$. Then the \emph{quiver Grassmannian $\Gr_\ue(X)$} is defined as the set of $\ue$-dimensional subrepresentations of $Q$. It gains the structure of a projective complex variety by embedding it into the product of the usual Grassmannians $\Gr(\ue_p,X_p)$ over all vertices $p$ of $Q$. 

Let $\ud=\udim X$. By considering $\Gr_\ue(X)$ as the fibre of the universal Grassmannian $\Gr(\ue,\ud)$ over the moduli space of $\ud$-representations of $Q$ with fixed basis, it gains the structure of a scheme. But we will make only implicit references to the schematic structure of the quiver Grassmannian in this text, and the reader might think safely of the quiver Grassmannian as a variety.

\begin{thmA}\label{mainthm}
 Let $Q$ be an acyclic quiver that is not of (extended) Dynkin type $E$. Then the we have the following characterization of the representation type of $Q$.
 \begin{enumerate}
  \item\label{main1} $Q$ is representation finite if and only if all quiver Grassmannians of indecomposable representations of $Q$ are smooth and have a cell decomposition into affine spaces.
  \item\label{main2} $Q$ is tame if and only if all quiver Grassmannians of indecomposable representations of $Q$ have a cell decomposition into affine spaces, but there exist quiver Grassmannians with singularities for indecomposable representations.
  \item\label{main3} $Q$ is wild if and only if every integer can be realized as the Euler characteristic of a quiver Grassmannian for $Q$.
 \end{enumerate}
\end{thmA}

\begin{remA}
 Note that, as explained above, the existence of cell decompositions implies the nonnegativity of the Euler characteristics. It follows that $Q$ is wild if it has a quiver Grassmannian with negative Euler characteristic.
\end{remA}

\begin{remB}
 It is already known for a while that not all quiver Grassmannians for the Kronecker quiver are smooth as schemes. This has been studied in detail in \cite{Cerulli-Esposito11}. For example, for every indecomposable representation $X$ of dimension $(2,2)$, the scheme $\Gr_{(1,1)}(X)$ is a nonreduced point; cf.\ Example 2 in \cite{Cerulli11}. However, this example is regular as a variety. In this paper, we exhibit for every tame quiver $Q$ a quiver Grassmannian that is singular as a variety, including extended Dynkin type $E$.
 
 It also follows from our proof that every wild quiver admits singular quiver Grassmannians (Corollary \ref{cor: singular quiver Grassmannian for wild quivers}).
\end{remB}

\begin{remC} 
 As explained in the proof of the main theorem, the quiver Grassmannians for indecomposable representations of representation finite quivers are smooth, including Dynkin type $E$. Combining this with our result on singular quiver Grassmannians for extended Dynkin type $E$, the assumption that $Q$ is not of type $E$ can thus be removed from the main theorem once we know that every quiver Grassmannian for an indecomposable representation of type $E$ admits a cell decomposition into affine spaces.
 
 At the time of writing, cellular decompositions for type $E$ are investigated in an ongoing collaboration of Giovanni Cerulli Irelli, Francesco Esposito, Hans Franzen and Markus Reineke, as we learned in private communication. There is hope that such decompositions into affine spaces will be established soon.
\end{remC}

\begin{remD}
 During the time of writing, Ringel has proven a result in \cite{Ringel17} that sharpens the last statement of the theorem: every projective scheme is isomorphic to a quiver Grassmannian for any fixed wild acyclic quiver $Q$.  His idea is comparable to the one of Lemma \ref{schofield}.

\end{remD}

\subsection*{Proof of the main theorem}
 It is clear that the characterizations of the different types of quivers are exclusive. In so far, it suffices to establish the respective properties for representation finite, tame and wild quivers.

 Let $Q$ be representation finite. By a result of Caldero and Reineke in \cite{cr}, the quiver Grassmannian $\Gr_\ue(X)$ is smooth if $X$ is an exceptional representation. Since every indecomposable representation of a representation finite quiver $Q$ is exceptional, we conclude that all quiver Grassmannians for indecomposable representations of $Q$ are smooth. As proven in section 3.2 of \cite{LW15a}, every quiver Grassmannian of Dynkin type $A$ or $D$ has a cell decomposition into affine spaces. This shows part \eqref{main1} of the main theorem.
 
 Let $Q$ be tame. By Theorem A in \cite{LW15b}, every quiver Grassmannian for an indecomposable representation of extended Dynkin type $D$ has a cell decomposition into affine spaces. We prove the corresponding result for extended Dynkin type $A$ in this paper. This proof uses different methods for representations in the homogeneous tubes (Theorem \ref{thm: cell deomposition for homogeneous tubes}) and for the other indecomposable representations, which are string modules (Theorem \ref{thm: cell decomposition for string modules}). 
 
 If the Auslander-Reiten quiver of $Q$ has a tube of rank $n\geq 2$, which is the case if $Q$ has at least $3$ vertices, then we exhibit a quiver Grassmannian with Poincar\'e polynomial $2q^2+1$, which cannot come from a smooth projective variety since it fails Poincar\'e duality (Theorem \ref{thm: singular quiver Grassmannians for tubes of large rank}). For the Kronecker quiver, we find a singular quiver Grassmannian in terms of an explicit calculation in coordinates (Theorem \ref{thm: singular quiver Grassmannians for the Kronecker quiver}). This shows part \eqref{main2} of the main theorem.
 
 Let $Q$ be a wild quiver. A theorem of Hille shows that every closed subscheme of $\P^{n-1}$ is isomorphic to a quiver Grassmannian for the $n$-Kronecker quiver. It is an immediate consequence that for $n\geq3$, every integer occurs as the Euler characteristic of a quiver Grassmannian (Corollary \ref{negativeKronecker}). Since every wild quiver contains a minimal wild quiver, it is enough to exhibit quiver Grassmannians with arbitrary Euler characteristics for minimal wild quivers. This reduction leads to a small list of quivers. We show that every quiver Grassmannian of any generalized Kronecker quiver is isomorphic to a quiver Grassmannian of a fixed minimal wild quiver (Proposition \ref{prop: roots with large extension for minimal wild quiver}). As a consequence, every integer occurs as an Euler characteristic of a quiver Grassmannian for a minimal wild quiver (Theorem \ref{thm: negative Euler characteristics for wild quivers}). This shows part \eqref{main3} and finishes the proof of the main theorem.

\subsection*{Complementary results}
 
Beside the main theorem, we prove the following additional facts in this paper.
\begin{itemize}
 \item Every representation infinite quiver has singular quiver Grassmannians (Theorems \ref{thm: singular quiver Grassmannians for tubes of large rank} and \ref{thm: singular quiver Grassmannians for the Kronecker quiver} and Corollary \ref{cor: singular quiver Grassmannian for wild quivers}).
 \item For every tame quiver, there are singular quiver Grassmannians for representations in exceptional and homogeneous tubes (Theorem \ref{thm: singular quiver Grassmannians in homogeneous tubes}).
 \item There are flat families of quiver Grassmannians whose fibres have different isomorphism types, different Poincar\'e polynomials and different Euler characteristics (Example \ref{ex: family of quiver Grassmannians}).
 \item We determine explicit formulae for the $F$-polynomials of all indecomposable representations of the Kronecker quiver (Theorem \ref{thm: F-polynomials for Kronecker}).
\end{itemize}

\subsection*{Acknowledgements} We would like to thank Jan Schr\"oer for sharing his ideas and, in particular, posing the question whether all wild quivers would admit negative Euler characteristics. We would like to thank Alex Massarenti for his help with an example of a singular quiver Grassmannian for the Kronecker quiver. We would like to thank Giovanni Cerulli Irelli and Hans Franzen for their remarks on a first draft of this text.

%%%%%%%%%%%%%%%%%%%%%%%%%%%%%%%%%%%%%%%%%%%%%%%%%%%%%%%%%%%%%%%%%%%%%%%%%%%%%%%%%%%%%%%%%%%%%%%%%%%%%%%%%%%%%%%%%%%%%%%%%%%%%%%%%%%%%%%%%%%%%%%%%%%%%%%%%%%%%%%%%%%%%
%%%%%%%%%%%%%%%%%%%%%%%%%%%%%%%%%%%%%%%%%%%%%%%%%%%%%%%%%%%%%%%%%%%%%%%%%%%%%%%%%%%%%%%%%%%%%%%%%%%%%%%%%%%%%%%%%%%%%%%%%%%%%%%%%%%%%%%%%%%%%%%%%%%%%%%%%%%%%%%%%%%%%

\section{Cell decomposition for tame quivers}
For an overview concerning the representation theory of (tame) quivers and the well-known results on them, which we use frequently, we refer to \cite[Sections 8,9]{Crawley-Boevey92} and \cite{rin5}.
We fix $k=\C$ as our ground field. We shortly review some basics on quiver representations. Let $Q=(Q_0,Q_1)$ be a quiver with vertex set $Q_0$ and arrow set $Q_1$. We denote arrows of $Q$ by $p\xlongrightarrow{v}q$ or $v:p\to q$ for $p,q\in Q_0$. Throughout the paper, we assume that $Q$ is acyclic, i.e.\ it has no oriented cycles, which means that the corresponding path algebra is finite-dimensional.  
For an arrow $v:p\to q$, let $s(v)=p$ and $t(v)=q$. For a vertex $p\in Q_0$, let 
\[N_p:=\{q\in Q_0\mid\exists \,p\xlongrightarrow{v}q\in Q_1\vee\exists\, q\xlongrightarrow{v}p\in Q_1\}\]
be the set of neighbours of $p$. 

Let $\Rep(Q)$ denote the category of finite-dimensional representations of $Q$. Consider the abelian group
$\mathbb{Z}Q_0=\bigoplus_{q\in Q_0}\mathbb{Z}q$ and its monoid of dimension vectors $\mathbb{N}Q_0$. For a representation $X\in\Rep(Q)$, we denote by $\udim X=\sum_{q\in Q_0}\dim X_q\cdot q$ its dimension vector. On $\Z Q_0$ we have a non-symmetric bilinear form, the Euler form,
which is defined by
\[\Sc{\alpha}{\beta}=\sum_{q\in Q_0}\alpha_q\beta_q-\sum_{v\in Q_1}\alpha_{s(v)}\beta_{t(v)}\]
for $\alpha,\,\beta\in\Z Q_0$. 
Recall that for two representations $X$, $Y$ of $Q$ we have
\begin{align*}\label{HomExt}\Sc{\underline{\dim} X}{\underline{\dim} Y}=\dim_k\Hom(X,Y)-\dim_k\Ext(X,Y)\end{align*}
and $\Ext^i(X,Y)=0$ for $i\geq 2$. For two representations $X$ and $Y$, define $[X,Y]=\dim\Hom(X,Y)$. Finally, we denote by $\tau$ and $\tau^{-1}$ the Auslander-Reiten translation.

If $\delta$ is the unique imaginary Schur root of a tame quiver, the defect of a module $X$ is defined by $\delta(X):=\Sc{\delta}{\udim X}$.

\subsection{Representation theory for $\tilde A_n$}
We first recall some facts on the Auslander-Reiten theory of $\tilde A_n$. Then we briefly explain how covering theory can be used to see that those representations which can be lifted to the universal covering $\widetilde{\tilde A_n}$ are precisely the string modules of $\tilde A_n$. For an introduction to covering theory, we refer to \cite{gab}.

For a fixed orientation of $\tilde A_n$, we can always apply BGP-reflections \cite{bgp} in order to obtain the following orientation
\[
\begin{xy}\xymatrix@R5pt@C30pt{&s_1\ar[r]^{\rho_2}&s_2 &\dots &s_{p-1}\ar[rd]^{\rho_p}\\q_1\ar[ru]^{\rho_1}\ar[rd]^{\mu_1}&&&&&q_2\\&t_1\ar[r]^{\mu_2}&t_2 &\dots &t_{q-1}\ar[ru]^{\mu_q}}\end{xy}
\]
for certain $q, p\geq 1$ with $p+q=n+1$. We denote this quiver by $\tilde A_{p,q}$. The Auslander-Reiten quiver of $\tilde A_{p,q}$ is of the same shape as the one of the original quiver. As we will see, the property of being a string module is preserved under BGP-reflections. This means that we can restrict to this case for the purpose of an overview. 

We first briefly describe the preprojective component of the Auslander-Reiten quiver, the preinjective is obtained dually. The indecomposable projective representations are uniquely determined by their dimension vectors, i.e.
\begin{align*}
 \udim P_{q_1} &= \textstyle \sum_{i=1}^{p-1}s_i+\sum_{i=1}^{q-1}t_i+q_1+2q_2,  & \udim P_{s_j} &= \textstyle \sum_{i=j}^{p-1}s_i+q_2, \\
 \udim P_{q_2} &= q_2,                                                           & \udim P_{t_l} &= \textstyle \sum_{i=l}^{p-1}t_i+q_2
\end{align*}
for $j=1,\ldots,p-1$ and $l=1,\ldots,q-1$. In the case $p=2$ and $q=2$, the preprojective component of the Auslander-Reiten quiver looks as follows. The general case is analogous.
\[
\begin{xy}\xymatrix@R5pt@C30pt{&&1112\ar[rd]\ar@{..}[rr]&&1222\ar[rd]&\dots\\&0101\ar@{..}[rr]\ar[ru]\ar[rd]&&1212\ar@{..}[rr]\ar[ru]\ar[rd]&&2323&\dots\\0001\ar@{..}[rr]\ar[ru]\ar[rd]&&0111\ar@{..}[rr]\ar[ru]\ar[rd]&&2223\ar[ru]\ar[rd]&\dots\\&0011\ar@{..}[rr]\ar[ru]\ar[rd]&&1122\ar@{..}[rr]\ar[ru]\ar[rd]&&2233&\dots\\&&1112\ar@{..}[rr]\ar[ru]&&1222\ar[ru]&\dots}\end{xy}
\]
Here the top and bottom row need to be identified and the order of the dimension vector is given by the ordering $(q_1,s_1,t_1,q_2)$. The dotted lines indicate the Auslander-Reiten translates.

In addition to the preprojective and preinjective component, there is a $\P^1$-family of components which are so-called tubes. All but two of them are of rank one which means that each representation $X$ in such a tube is its own Auslander-Reiten translate $\tau X$. These tubes are called homogeneous. Moreover, there exist two tubes of ranks $p$ and $q$, i.e.\ $\tau^p X=X$ (resp. $\tau^q X=X$)  for every representation $X$ in this tube. We will observe that every representation in one of these tubes is a string module. Tubes which are not of rank one are called exceptional. The quasi-simples in the tube of rank $p$ are given by the simple representations corresponding to the dimension vectors $s_1,\ldots, s_{p-1}$ and to the unique indecomposable representation of dimension $q_1+q_2+\sum_{i=1}^{q-1}t_i$ (with $X_{\rho_1}=0$ if $p=1$).  In turn, the quasi-simples in the tube of rank $q$ are given by the simple representations corresponding to the dimension vectors $t_1,\ldots, t_{q-1}$ and to the unique indecomposable representation of dimension $q_1+q_2+\sum_{i=1}^{p-1}s_i$ (with $X_{\mu_1}=0$ if $q=1$). We denote the corresponding representations by $S_i$ and $T_j$ for $i=1,\ldots,p$ and $j=1,\ldots,q$. Then we have $\tau^{-1}S_i=S_{i+1}$ and $\tau^{-1}S_p=S_1$ and the same is true for the representations $T_i$. It is straightforward to construct all regular representation which are in the same tube recursively. Indeed every representation $R$ in this tube has a quasi-simple subrepresentation $S_i$ such that $R/S_i$ is also regular and in the same tube. Thus all representations are given as middle terms of exact sequences between indecomposable regular representations.

For a quiver $Q$, let $W_Q$ be the free group with generators $\rho\in Q_1$.
We define the universal cover $\tilde Q$ of $Q$ by the vertices $\tilde Q_0= Q_0\times W_Q$ and the arrows $\tilde Q_1=Q_1\times W_Q$ where $(\rho,w):(i,w)\to (j,w\rho)$ for all $\rho:i\to j\in Q_1$ and $w\in W_Q$. Then $\tilde Q$ comes along with a natural map $F:\tilde Q\to Q$ inducing a functor $F:\Rep(\tilde Q)\to \Rep(Q)$, see \cite{gab} for more details. We say that a representation can be lifted to $\tilde Q$ if $F^{-1}(X)$ is not empty. 

\begin{df}\label{string}
We say that a representation $X$ is a string module if it can be lifted to a representation $\tilde X$ of $\tilde Q$ such that $\dim \tilde X_{q,w}\in\{0,1\}$ for all $q\in Q_0$, $w\in W_Q$.
\end{df}
Thus every connected component of the universal covering quiver of $\tilde A_n$ is a quiver of type $A_\infty$. Thus its indecomposable representations are string modules. Note that an indecomposable string module $X$ of $\tilde A_n$ has a unique starting vertex $s_X$ and terminating vertex $t_X$. Moreover, there are two unique vertices $q\in N_{s_X}$ and $q'\in N_{t_X}$ respectively with $\dim X_{q}=\dim X_{q'}=0$. We denote the unique arrow connecting $q$ and $s_X$ by $v(s)$ and unique arrow connecting $q'$ and $t_X$ by $v(t)$.

\begin{lemma}\label{orientation}Let $X$ be an indecomposable string module of $\tilde A_n$. Then $X$ is preprojective if and only if $v(s_X)$ and $v(t_X)$ are oriented towards $s_X$ and $t_X$, preinjective if and only if $v(s_X)$ and $v(t_X)$ are oriented away from $s_X$ and $t_X$ and regular otherwise.
\end{lemma}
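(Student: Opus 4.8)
The plan is to reduce the statement to the sign of the defect $\delta(X)=\Sc{\delta}{\udim X}$, relying on the standard fact for tame quivers (see \cite{Crawley-Boevey92}) that an indecomposable $X$ is preprojective, regular or preinjective according to whether $\delta(X)$ is $<0$, $=0$ or $>0$. For $\tilde A_n$ the imaginary Schur root $\delta$ is the all-ones vector $\sum_{q\in Q_0}q$, so the entire lemma follows once I establish the identity $\delta(X)=1-\epsilon_s-\epsilon_t$, where $\epsilon_s,\epsilon_t\in\{0,1\}$ record whether $v(s)$ and $v(t)$ point \emph{towards} $s_X$ and $t_X$. Indeed, this identity gives $\delta(X)=-1$ exactly when both arrows point inward, $\delta(X)=1$ exactly when both point outward, and $\delta(X)=0$ in the mixed case, which is precisely the trichotomy asserted.

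To prove the identity I would pass to the universal cover $\widetilde{\tilde A_n}$ and lift $X$ to an indecomposable $\tilde X$. Since every component of the cover is of type $A_\infty$, the representation $\tilde X$ is thin and supported on a finite interval $[a,b]$ whose endpoints map to $s_X$ and $t_X$ under $F$. Because the all-ones vector is constant and hence $F$-invariant, a direct reindexing of the fibres of $F$ shows that the defect may be computed upstairs, $\delta(X)=\langle\mathbf 1,\udim\tilde X\rangle$. Writing $\udim\tilde X$ as the indicator of $[a,b]$, the vertex sum contributes $b-a+1$, while the arrow sum $\sum_{\tilde v}(\udim\tilde X)_{t(\tilde v)}$ counts the $b-a$ interior arrows, each contributing $1$ irrespective of its orientation, together with a contribution of $1$ from each boundary arrow that points into the interval. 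This yields exactly $\delta(X)=(b-a+1)-\big((b-a)+\epsilon_s+\epsilon_t\big)=1-\epsilon_s-\epsilon_t$.

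The step requiring the most care — and the main obstacle — is matching the boundary data of the interval with the quiver data $q,q',v(s),v(t)$ in $\tilde A_n$: one must know that the neighbour $a-1$ of $a$ and the neighbour $b+1$ of $b$ in the cover map to the vertices $q$ and $q'$ with $\dim X_q=\dim X_{q'}=0$, i.e.\ that the interval does not wrap around far enough to place a second lift of $q$ (or $q'$) inside its support. This is guaranteed precisely by the existence and uniqueness of $q,q'$ recorded before the lemma, and since $F$ is a local isomorphism on arrows it identifies $v(s),v(t)$ with the two boundary arrows of $[a,b]$ while preserving their orientations. Combining this identification with the computed identity and the defect criterion finishes the proof. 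As a consistency check one may note that the simple projective at a sink has two incoming adjacent arrows and defect $-1$, the simple injective at a source has two outgoing arrows and defect $1$, and a simple regular such as $S_1$ has one arrow of each type and defect $0$.
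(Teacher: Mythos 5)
Your proof is correct, but it takes a genuinely different route from the paper. The paper argues by induction along BGP reflections: the claim is checked for simple representations, and since every preprojective (resp.\ preinjective) indecomposable is obtained from a simple projective by a series of reflections at sources (resp.\ sinks) which become sinks (resp.\ sources), and since these reflections can be performed on the universal cover, the statement propagates; the regular case is the complementary one. You instead reduce to the defect trichotomy for tame quivers (an indecomposable $X$ is preprojective, regular or preinjective according to whether $\Sc{\delta}{\udim X}$ is negative, zero or positive) and establish the closed formula $\Sc{\delta}{\udim X}=1-\epsilon_s-\epsilon_t$ by computing the Euler form on the lift: the vertex sum gives $b-a+1$, the $b-a$ interior arrows each contribute $1$ irrespective of orientation, and each boundary arrow contributes $1$ exactly when it points into the interval. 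This computation is valid (the reindexing of arrow fibres under $F$ is correct, and the count is insensitive to the string wrapping around the cycle), it yields all three biconditionals at once with no induction, and it meshes with the defect bookkeeping the paper uses later (Lemma \ref{lem1}, Proposition \ref{notsmooth}); what the paper's argument buys instead is that it stays entirely within the reflection-functor framework already set up in this section and never needs to invoke the defect criterion. One caveat, which you flag yourself: identifying your two boundary arrows with $v(s_X)$ and $v(t_X)$ as defined before the lemma uses the asserted existence of neighbours $q,q'$ with $\dim X_q=\dim X_{q'}=0$, which for strings wrapping all the way around the cycle only makes sense on the cover; this is an imprecision in the paper's setup that both proofs inherit, and your defect computation in fact handles it uniformly once $v(s_X)$ and $v(t_X)$ are read off from the lift.
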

\begin{proof}This is clearly true for simple representations. As every preprojective (resp. preinjective) representation can be obtained from a simple projective representation (possibly of another quiver) by a series of BGP-reflections at sources (resp. sinks) which become sinks (resp. sources) after reflecting, the claim follows by induction. Note that we can apply BGP-reflections on the universal covering.
\end{proof}

\begin{lemma}
 Every indecomposable representation of $\tilde A_n$ that lies in the preinjective or preprojective component or in an exceptional tube of the Auslander-Reiten quiver is a string module.
\end{lemma}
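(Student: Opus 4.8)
The plan is to treat the preprojective and preinjective representations separately from those in the exceptional tubes, with both arguments reducing to the observation made just before the statement: every connected component of the universal cover $\widetilde{\tilde A_n}$ is of type $A_\infty$, so all of its indecomposable representations are thin (multiplicity at most one at each vertex). Consequently the image under $F$ of any indecomposable representation of $\widetilde{\tilde A_n}$ is automatically a string module in the sense of Definition \ref{string}, and the whole task becomes to exhibit, for each relevant indecomposable $X$ of $\tilde A_n$, a thin representation $\tilde X$ of $\widetilde{\tilde A_n}$ with $F(\tilde X)\cong X$.

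\emph{Preprojective and preinjective components.} Here I would argue exactly as in the proof of Lemma \ref{orientation}. Every simple projective representation is one-dimensional, hence trivially a string module, its lift being the simple representation at a single vertex of the cover. Every preprojective representation is obtained from a simple projective representation of a possibly reoriented quiver by a series of BGP-reflections at sources, and these reflections may be carried out on the universal cover compatibly with $F$. Since BGP-reflections preserve the property of being a string module, an induction on the number of reflections shows that every preprojective representation is a string module. The preinjective case is dual, starting from simple injective representations and reflecting at sinks.

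\emph{Exceptional tubes.} Consider the tube of rank $p$, the rank-$q$ case being symmetric. First I would check the quasi-simples $S_1,\dots,S_p$. The representations $S_1,\dots,S_{p-1}$ are the simple representations at the vertices $s_1,\dots,s_{p-1}$, hence string modules. The remaining quasi-simple $S_p$ has the thin dimension vector $q_1+q_2+\sum_{i=1}^{q-1}t_i$, and its support is the linear subquiver $q_1\to t_1\to\dots\to t_{q-1}\to q_2$ of type $A$; a thin indecomposable on a type-$A$ quiver lifts to an interval inside a single $A_\infty$-component, so $S_p$ is a string module. For a general indecomposable $R$ in the tube I would then induct on the quasi-length using the recursion recalled above: $R$ admits a quasi-simple subrepresentation $S_i$ with $R/S_i$ again regular in the tube, giving $\ses{S_i}{R}{R/S_i}$. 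The inductive hypothesis supplies a thin lift of $R/S_i$, i.e.\ an interval module on the cover; gluing it to the lift of $S_i$ along the arrow realizing the extension yields an interval, hence thin, representation $\tilde R$ with $F(\tilde R)\cong R$. Equivalently and more conceptually, every such $R$ is the string module of a single walk in $\tilde A_n$ obtained by concatenating the arms of the successive quasi-simples, and this walk lifts uniquely along the covering to an interval of $\widetilde{\tilde A_n}$ whose image under $F$ is $R$.

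\emph{Main obstacle.} The delicate point is the extension step for the tubes, since extensions of string modules need not be string modules in general. I must verify that the recursion really produces a single connected walk and that the associated thin lift pushes forward to $R$ itself rather than to some other extension. This is exactly where the $A_\infty$-shape of the cover is decisive: it forces every indecomposable upstairs to be a thin interval, so that the interval which is periodic up to the deck transformation accounting for the $\tau$-periodicity of the tube maps under $F$ to precisely the indecomposable of the prescribed quasi-length. Making the identification of tube indecomposables with walks rigorous, together with the compatibility of $F$ and $\tau$ that places an entire tube in the image of $F$, is the step that demands care; by contrast, the preprojective and preinjective cases are routine once Lemma \ref{orientation} and the invariance of string modules under BGP-reflections are available.
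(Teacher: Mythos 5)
Your proof is correct, and it follows the same overall covering-theoretic strategy as the paper: reduce everything to exhibiting thin lifts to $\widetilde{\tilde A_n}$, whose connected components are of type $A_\infty$, so that indecomposability upstairs forces thinness. However, you substitute both of the paper's key inputs. For the preprojective and preinjective components, the paper does not argue via reflection functors; it observes that the covering functor is compatible with the Auslander--Reiten translation ($\tau_{\tilde Q}\tilde X=\widetilde{\tau_Q X}$ for non-projective $X$, and dually), so liftability propagates from the projectives and injectives along $\tau$-orbits. Your route via BGP-reflections at sources and sinks is the one the paper itself uses to prove Lemma \ref{orientation}, so it is equally available; it avoids having to verify the $\tau$-compatibility of the covering functor, at the cost of tracking reorientations of the quiver through the induction. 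For the exceptional tubes, the paper's base case is the full set of exceptional representations in the tube, which are tree modules by Ringel's theorem \cite{rin1} and hence liftable, after which it runs the recursion with arbitrary $X_1,X_2$ in the tube satisfying $\Ext(X_2,X_1)=k$; you instead identify only the quasi-simples directly as string modules (the simples, plus the thin module supported on an arm) and climb the tube along quasi-socle sequences $\ses{S_i}{R}{R/S_i}$. Your version is more self-contained, as it needs no appeal to \cite{rin1}, while the paper's version outsources the gluing mechanics to the tree-module literature \cite{wei}.

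One point you should make explicit to close your acknowledged ``main obstacle'': record that $\Ext(R/S_i,S_i)$ is one-dimensional, which follows from the Auslander--Reiten formula since it is dual to $\Hom(S_i,\tau(R/S_i))$ and $\tau(R/S_i)$ has quasi-socle $S_i$. This is what guarantees that \emph{any} nonsplit extension of $R/S_i$ by $S_i$ is isomorphic to $R$; an arrow-glued extension is nonsplit because its middle term is indecomposable, being the push-down of an indecomposable representation along a covering with free (hence torsion-free) group \cite{gab}. Your appeal to periodicity and deck transformations is vaguer than necessary, whereas this one-line dimension count does the job; with it, your argument reaches at least the level of rigor of the paper's own assertion that a basis element of $\Ext(X_2,X_1)=k$ ``can be chosen so as to correspond to an arrow of $\tilde A_n$''.
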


\begin{proof} We will use well-known facts on tree modules throughout the proof. For more details on tree modules we refer to \cite{wei}. It is clear that the projective and injective representations can be lifted to $\widetilde{\tilde A_n}$. Moreover, it is straightforward to check that the lifting property is provided under Auslander-Reiten translation, i.e. $\tau_{\tilde Q}^{-1} \tilde X =\widetilde{\tau_Q^{-1}X}$ for non-injective representations and  $\tau_{\tilde Q} \tilde X =\widetilde{\tau_QX}$ for non-projective representations. Thus the claim for preprojective and preinjective representations follows.

Fix an exceptional tube of rank $m$. Then it contains $m$ indecomposables of dimension $n\delta$ for each $n\geq 1$. It also contains $m(m-1)$ exceptional representations of dimension $\alpha<\delta$. These exceptional representations are tree modules by \cite{rin1} and thus string modules. Indeed, it is well-known that all tree modules can be lifted to the universal covering.
An arbitrary representation $X$ in this tube is obtained recursively as middle term of an exact sequence of the form $\ses{X_1}{X}{X_2}$ where $X_1$ and $X_2$ are representations lying in the same tube satisfying $\Ext(X_2,X_1)=k$. Thus a basis element of $\Ext(X_2,X_1)=k$ can be chosen in such a way that it corresponds to an arrow of $\tilde A_n$. This shows that $X$ is also a tree module and thus a string module if $X_1$ and $X_2$ are. 
\end{proof}

\begin{rem} 
Let $\delta=(1,\ldots,1)$ be the unique imaginary Schur root of $\tilde A_n$. If $X$ is a fixed preprojective representation, then $\udim X+\delta$ is also a preprojective root. Moreover, the string corresponding to $\udim X+\delta$ is obtained by glueing the appropriate string module of dimension $\delta$ to it. It can be checked that all preprojective representations are obtained in this way.

Analogously, if $\alpha$ is a regular root of $\tilde A_n$, then $\alpha+\delta$ is also a regular root. The corresponding indecomposable of dimension $\alpha+\delta$ is obtained in the same manner.
\end{rem}

\subsection{Homogeneous tubes}
As part of the results about quiver Grassmannians of extended Dynkin type $D$, the authors show in section 1.7 of \cite{LW15b} that all quiver Grassmannians for a indecomposable representation in a homogeneous tube admit a cell decomposition into affine spaces. However, the proof of this result does not rely on any particular properties of type $D$, but applies to all tame quivers, including extended Dynkin type $E$. Therefore, we have:

\begin{thm}\label{thm: cell deomposition for homogeneous tubes}
 Let $Q$ be a tame quiver and $X$ an indecomposable representation in a homogeneous tube. Then every quiver Grassmannian for $X$ admits a cell decomposition into affine spaces.
\end{thm}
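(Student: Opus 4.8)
The plan is to reduce the statement to the computation carried out in \cite[\S 1.7]{LW15b} and then to observe that this computation invokes only the abstract structure of a module in a homogeneous tube, which is the same for every tame quiver. Concretely, a homogeneous tube has rank one, so $X$ is the unique indecomposable representation $R^{(n)}$ of dimension $n\delta$ supported on the quasi-simple brick $R=R^{(1)}$ of dimension $\delta$, characterised by $\Hom(R,R)=\C$ and $\dim\Ext(R,R)=1$. Its endomorphism ring is the local ring $A=\C[t]/(t^n)$, and $t$ acts as a nilpotent endomorphism $N\in\Hom(X,X)$ of the $Q$-representation $X$ with $N^{n}=0\neq N^{n-1}$. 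I would first record that $X$ is free of rank one over $A$ inside the tube; equivalently, each vertex space $X_q$ is a free $A$-module of rank $\delta_q$, every arrow map $X_v$ is $A$-linear, and $X/NX\cong R$. The only datum that varies with the type is the vector $\delta$, which is precisely why the extended Dynkin type $D$ argument transfers verbatim to all tame quivers.

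Next I would analyse $\Gr_\ue(X)$ through the interaction of a subrepresentation $U$ with the $N$-filtration $X\supseteq NX\supseteq\dots\supseteq N^{n-1}X\supseteq0$, whose subquotients are all isomorphic to $R$. Following \cite[\S 1.7]{LW15b}, the idea is to stratify $\Gr_\ue(X)$ by the dimension vectors of the intersections $U\cap N^{i}X$, and to relate each stratum, via the induced subrepresentation $U\cap NX\subseteq NX\cong R^{(n-1)}$ and the induced quotient inside $X/NX\cong R$, to quiver Grassmannians of the smaller modules $R^{(n-1)}$ and $R$, the remaining freedom being organised into ordinary Grassmannians $\Gr(a,b)$. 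Since ordinary Grassmannians carry their Schubert cell decomposition, an induction on $n$ — with base case the quiver Grassmannian of the brick $R$, treated directly — would then yield a decomposition of $\Gr_\ue(X)$ into affine spaces. Equivalently, one may fix the $A$-adapted basis $\{N^{i}b_{j}\}$ and run the Schubert decomposition of \cite{LW15a} relative to this basis; the cycles appearing in the coefficient quiver of the band module $R$ are exactly the feature that distinguishes the present case from the string modules and that must be dealt with by hand.

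The main obstacle is that these quiver Grassmannians may be genuinely singular as varieties — this is the whole point of the tame case — so one cannot appeal to the Bialynicki-Birula decomposition of a smooth variety, nor to any torus inside $\operatorname{Aut}(X)=A^{\times}$, whose maximal torus consists of scalars and therefore acts trivially on $\Gr_\ue(X)$. The affine paving must accordingly be produced explicitly, and I expect the delicate points to be twofold: (a) to prove that the $N$-constrained strata occurring in the base — a Spaltenstein-type locus inside an ordinary Grassmannian cut out by incidence conditions of the shape $NU\subseteq U'$ — themselves form an affine paving; and (b) to upgrade each stratified family to a fibration that is Zariski-locally trivial over every base cell, so that the total strata are affine \emph{spaces} rather than merely affine or rational varieties. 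Once (a) and (b) are established, assembling base cells with fibre cells gives the desired cell decomposition into affine spaces, uniformly in the type and in particular for extended Dynkin type $E$.
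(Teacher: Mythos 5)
You take essentially the same approach as the paper: its entire proof consists of citing \cite[Section 1.7]{LW15b} and observing that the argument given there for type $\widetilde D_n$ uses only the intrinsic structure of an indecomposable representation in a homogeneous tube, not any property special to type $D$, and hence applies verbatim to every tame quiver. Your more detailed reconstruction of the mechanism behind that citation (the filtration by $N^iX$, the stratification by intersections, and the induction on $n$) goes beyond what the paper itself records, but it is consistent with the cited argument and does not change the route.
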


%%%%%%%%%%%%%%%%%%%%%%%%%%%%%%%%%%%%%%%%%%%%%%%%%%%%%%%%%%%%%%%%%%%%%%%%%%%%%%%%%%%%%%%%%%%%%%%%%%%%%%%%%%%%%%%%%%%%%%%%%%%%%%%%%%%%%%%%%%%%%%%%%%%%%%%%%%%%%%%%%%%%%

\subsection{String modules and extended Dynkin type A}

Let $Q$ be a quiver of extended Dynkin type $\widetilde A_{n-1}$. Then all indecomposable representations of $Q$, but those in the homogeneous tubes, are string modules. For these particular string modules, we can apply the techniques of \cite{L13} and \cite{LW15a} to establish cell decompositions into affine spaces. 

All indecomposable string modules $X$ of $Q$ have a basis $\cB$ such that the coefficient quiver $\Gamma=\Gamma(X,\cB)$ is as depicted in the following illustration. The canonical map $\pi:\Gamma\to Q$ corresponds to the vertical projection in this picture.
\[
 \beginpgfgraphicnamed{tikz/fig1}
 \begin{tikzpicture}[>=latex]
  \matrix (m) [matrix of math nodes, row sep=1.3em, column sep=2.3em, text height=1ex, text depth=0ex]
%column   1       2       3       4       5        6       7   
   {          &       &       &       &   l    &\dotsb &   n   \\   % row 1
         n+1  &\dotsb & n+k   &\dotsb &  n+l   &\dotsb &  2n   \\   % row 2
         2n+1 &\dotsb &       &       &        &       &\vdots \\   % row 3
       \vdots &       &       &       &        &\dotsb &  rn   \\   % row 4
        rn+1  &\dotsb & rn+k  &       &        &       &      & \Gamma \\   % row 5
          \   &       &       &       &        &       &                \\   % row 6
         q_1  &\dotsb &  q_k  &\dotsb &  q_l   &\dotsb &  q_n & Q      \\   % row 7
};
   \path[-,font=\scriptsize]
   (m-1-5) edge node[auto] {$v_l$} (m-1-6)
   (m-1-6) edge node[auto] {$v_{n-1}$} (m-1-7)
   (m-2-1) edge node[auto] {$v_n$} (m-1-7)
   (m-2-1) edge node[auto, swap] {$v_1$} (m-2-2)
   (m-2-2) edge node[auto, swap] {$v_{k-1}$} (m-2-3)
   (m-2-3) edge node[auto, swap] {$v_k$} (m-2-4)
   (m-2-4) edge node[auto] {$v_{l-1}$} (m-2-5)
   (m-2-5) edge node[auto] {$v_l$} (m-2-6)
   (m-2-6) edge node[auto] {$v_{n-1}$} (m-2-7)
   (m-3-1) edge node[auto,swap] {$v_n$} (m-2-7)
   (m-3-1) edge node[auto, swap] {$v_1$} (m-3-2)
   (m-4-6) edge node[auto] {$v_{n-1}$} (m-4-7)
   (m-5-1) edge node[auto] {$v_n$} (m-4-7)
   (m-5-1) edge node[auto, swap] {$v_1$} (m-5-2)
   (m-5-2) edge node[auto, swap] {$v_{k-1}$} (m-5-3)
   (m-7-1) edge node[auto, swap] {$v_1$} (m-7-2)
   (m-7-2) edge node[auto, swap] {$v_{k-1}$} (m-7-3)
   (m-7-3) edge node[auto, swap] {$v_{k}$} (m-7-4)
   (m-7-4) edge node[auto, swap] {$v_{l-1}$} (m-7-5)
   (m-7-5) edge node[auto, swap] {$v_l$} (m-7-6)
   (m-7-6) edge node[auto, swap] {$v_{n-1}$} (m-7-7)
   (m-7-1) edge[bend left=10] node[auto] {$v_n$} (m-7-7)
   ;
   \path[->,font=\scriptsize]
   (m-5-8) edge node[auto] {$\pi$} (m-7-8)
   ;
  \end{tikzpicture}
\endpgfgraphicnamed
\]
Note that the arrows of $Q$ can be arbitrarily oriented and that we allow the case that $l\leq k$, which means that the vertices $q_k$ and $q_l$ have to change positions in the above picture. 

Let $\ue$ be a dimension vector for $Q$. A subset $\beta$ of $\Gamma_0$ is of type $\ue$ if $\beta\cap\pi^{-1}(p)$ has cardinality $\ue_p$ for every $p\in Q$. A subset $\beta$ of $\Gamma_0$ is \emph{successor closed} if for every arrow $v:s\to t$ in $\Gamma$ with $s\in\beta$, we also have $t\in\beta$.

\begin{thm}\label{thm: cell decomposition for string modules}
 Let $X$ be an irreducible string module and $\ue$ a dimension vector of $Q$. Then $\Gr_\ue(X)$ has a cell decomposition into affine spaces. The cells $C_\beta^X$ of this decomposition are labelled by the successor closed subsets $\beta$ of type $\ue$. Consequently, the Euler characteristic of $\Gr_\ue(X)$ equals the number of successor closed subsets of $\Gamma_0$.
\end{thm}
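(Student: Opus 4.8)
The plan is to realize $\Gr_\ue(X)$ as a disjoint union of Schubert cells attached to the fixed ordered basis $\cB$, following the method of \cite{L13} and \cite{LW15a}, and then to use the string shape of the coefficient quiver $\Gamma$ to verify that each nonempty cell is an affine space. First I would fix a total order on $\cB$ refining the layout of $\Gamma$ in the illustration (reading rows from top to bottom and, within a row, left to right). To a subrepresentation $U\in\Gr_\ue(X)$ I attach its set of pivot positions $\beta(U)\subseteq\Gamma_0$, obtained by putting the coordinate matrix of $U$ with respect to $\cB$ into reduced row echelon form. This produces a decomposition $\Gr_\ue(X)=\bigsqcup_\beta C_\beta^X$ into locally closed subsets indexed by subsets $\beta$ of $\Gamma_0$ of type $\ue$, where $C_\beta^X=\{U\mid\beta(U)=\beta\}$.

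Next I would determine which cells are nonempty. The coordinate subspace $x_\beta$ spanned by the basis vectors indexed by $\beta$ is a subrepresentation precisely when $\beta$ is successor closed: for an arrow $v\colon s\to t$ of $\Gamma$ the structure map sends $b_s$ to a scalar multiple of $b_t$, so $x_\beta$ is stable under all structure maps if and only if $s\in\beta$ forces $t\in\beta$. Since $x_\beta$ is the unique fixed point lying in $C_\beta^X$ for the $\C^*$-action scaling the basis by the weights induced from the chosen order, a cell is nonempty exactly when $\beta$ is successor closed of type $\ue$. This yields the asserted labelling of the cells.

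The heart of the argument, and the step I expect to be the main obstacle, is to show that each nonempty $C_\beta^X$ is isomorphic to an affine space. Here I would parametrize $C_\beta^X$ by those matrix entries that are not forced by the echelon normal form and translate the condition of being a subrepresentation into polynomial equations in these free coordinates. The crucial input is that the coefficient quiver $\Gamma$ of a string module is a disjoint union of strings, hence a tree in which every vertex has at most one neighbour on each side; this is exactly the hypothesis under which the cell criterion of \cite{L13} and \cite{LW15a} guarantees that the subrepresentation equations solve triangularly and cut out an affine space. Concretely, I would order the free coordinates compatibly with the arrows of $\Gamma$ and show, propagating along each string away from the pivots, that every equation is either vacuous or expresses one free coordinate as a polynomial in coordinates occurring earlier, so that $C_\beta^X\cong\A^{d_\beta}$. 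The delicate point is to check that arrows of $\Gamma$ pointing against the chosen order produce no genuine nonlinear constraints; the absence of branching in $\Gamma$ is precisely what rules this out.

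Finally, since $\Gr_\ue(X)$ is then covered by affine cells, its cohomology is concentrated in even degrees and the Euler characteristic equals the number of cells. By the labelling above this is the number of successor closed subsets of $\Gamma_0$ of type $\ue$, which completes the proof.
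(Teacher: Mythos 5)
Your overall plan---a Schubert decomposition with respect to an ordered basis, identification of the nonempty cells with the successor closed subsets via a $\C^*$-action, and a triangular-solving argument showing each cell is affine---is the same route the paper takes (the paper delegates the last step to the machinery of \cite{L13} and \cite{LW15a}). However, the justification you give for the crucial step is wrong. You claim that ``the absence of branching in $\Gamma$ is precisely what rules out'' genuine nonlinear constraints, so that ordering the free coordinates along the string always makes the defining equations solve triangularly. The paper itself refutes this: in the proof of Theorem \ref{thm: singular quiver Grassmannians for the Kronecker quiver}, $X$ is a string module of the Kronecker quiver whose coefficient quiver is the branchless string $1\leftarrow 2\to 3\leftarrow 4\to 5\leftarrow 6$, ordered linearly along the string, and yet the Schubert cell $C_\beta^X$ for the successor closed subset $\beta=\{3,5,6\}$ is identified with the hypersurface $xy+xz+yz^2=0$ in $\A^3$, which is singular at the origin and in particular not an affine space. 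What fails there has nothing to do with branching: the single pair of basis vectors $(1,6)$ carries one equation for \emph{each} of the two arrows $a$ and $b$ of $Q$, and these two coupled equations cannot be solved one variable at a time. This is exactly the phenomenon that the hypothesis of \cite[Thm.~4.1]{L13} (every relevant pair is maximal for at most one arrow of $Q$) and the patchwork-of-extremal-paths condition of \cite{LW15a} are designed to exclude; whether it occurs is governed by how the string winds around the cycle of $\widetilde A_{n-1}$ relative to the chosen order and the projection $\pi\colon\Gamma\to Q$, not by the tree shape of $\Gamma$.

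As written, your argument therefore proves too much: it would apply verbatim to the regular string module above, which lies in a rank-one tube (its Grassmannians do admit cell decompositions, by Theorem \ref{thm: cell deomposition for homogeneous tubes}, but not via these Schubert cells). To close the gap you need the inputs the paper actually uses: the reduction of preinjective modules to preprojective modules of the opposite quiver via $\ue\mapsto\udim X-\ue$; Lemma \ref{orientation}, which guarantees that for preprojective $X$ the end arrow $v_k$ is oriented towards $q_k$; the reversal of the order of $\cB$ in the exceptional-tube case to arrange the same condition; and then an actual verification that, with this order, no pair of basis vectors is maximal for two different arrows of $Q$ (equivalently, that the reduced Schubert system decomposes into extremal paths). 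Your $\C^*$-fixed-point argument for the labelling of the nonempty cells is fine in spirit---it is Cerulli Irelli's argument from \cite{Cerulli11}---but there, too, you should check that the weights can be chosen injective on each fibre of $\pi$ and with constant difference along the arrows of $\Gamma$ lying over a fixed arrow of $Q$, so that the action really preserves $\Gr_\ue(X)$.
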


\begin{proof}
 Note that if $\Gr_\ue(X)$ has a cell decomposition into affine spaces, then its cohomology is concentrated in even degrees and its Euler characteristic equals the number of cells. Therefore the last claim of the theorem follows once the cell decomposition and the labelling of the cells is established.

 The existence of a cell decomposition into affine spaces follows easily from the results in either \cite{L13} or \cite{LW15a}. Both proofs are based on certain tools and properties---ordered polarizations and relevant maximal pairs in the former case and Schubert systems in the latter case. Since the introduction of these notions would require more space than the actual proof, we choose to don't burden this paper with lengthy expositions, but restrict ourselves to the outline of both proofs and refer the reader to the corresponding paper for definitions. In particular, we like to mention that the general case is proven analogously to the special case where $Q$ is the Kronecker quiver and $X$ is a preprojective representation, cf.\ Example 4.5 in \cite{L14} for the former method and Proposition 3.1 in \cite{LW15a} for the latter method.
 
 As a first common step, we note that the preinjective representations $X$ of $Q$ stay in natural correspondence to the preprojective representations $X^\ast$ of $Q^\ast$ where $Q^\ast$ results from $Q$ by reversing all arrows. This association defines an isomorphism $\Gr_\ue(X)\to\Gr_{\ue^\ast}(X^\ast)$ of quiver Grassmannians where $\ue^\ast=\udim X-\ue$. See section 1.8 in \cite{LW15b} for details.
 
 This correspondence reduces the proof to preprojective representations and representations in an exceptional tube. Let $\cB$ be the ordered basis as depicted in the illustration above. Note that for preprojective $X$, the arrow $v_k$ is oriented towards $q_k$, see Lemma \ref{orientation}. If $X$ is in an exceptional tube, then we can also assume that $v_k$ is oriented towards $q_k$. If this was not the case, we can use the reverse order of $\cB$, i.e.\ exchange $i\in\cB$ by $rn+k+l-i$, and relabel the vertices of $Q$ correspondingly to exchange the roles of $q_k$ and $q_l$, so that our assumption is satisfied.
 
 \textit{First proof:} Theorem 4.1 of \cite{L13} provides a cell decomposition of $\Gr_\ue(X)$ into affine spaces provided that $X$ admits an ordered polarization (cf.\ \cite[section 3.3]{L13}) such that every relevant pair (cf.\ \cite[section 2.3]{L13}) is maximal for at most one arrow of $Q$ (cf.\ \cite[section 3.4]{L13}). The same theorem states that the cells $C_\beta^X$ are labelled by the extremely successor closed subsets $\beta$ of $\Gamma_0$ (cf.\ \cite[section 3.1]{L13}). Since $\pi:\Gamma\to Q$ is unramified (cf.\ \cite[section 3.2]{L13}), a subset $\beta$ of $\Gamma_0$ is extremal successor closed if and only if it is successor closed (cf.\ \cite[section 3.1]{L13}).
 
 We indicate why these hypotheses are satisfied for the chosen ordered basis $\cB$. Thanks to the simple shape of the coefficient quiver, it can be seen immediately that $\cB$ is a polarization. That $\cB$ is an ordered polarization follows from the fact that in the above illustration of $\Gamma$ we do not have arrows crossing each other. That every relevant pair $(i,j)\in\cB\times\cB$ is maximal for at most one arrow of $Q$ follows from the shape of $\Gamma$ and the specific ordering of $\cB$.
 
 \textit{Second proof:} The reduced Schubert system $\overline\Sigma=\overline\Sigma(X,\cB)$ (cf.\ \cite[Def.\ 2.12]{LW15a}) admits a patchwork $\{\Xi_j\}_{j=1,\dotsc,s}$ (cf.\ \cite[Def.\ 2.31]{LW15a}) with $s=r-1$ if $k<l$ and $s=r$ if $k\geq l$ whose patches $\Xi_j$ are as follows:
 \[
 \beginpgfgraphicnamed{tikz/fig2}
  \begin{tikzpicture}[>=latex]
   \matrix (m) [matrix of math nodes, row sep=1em, column sep=2em, text height=1.5ex, text depth=0.5ex]
    { \node[vertex](b-1-k){v_{l-1},l,jn+l-1}; & \node[vertex](1-k+1){l,jn+l}; & \node[vertex](a-1-k+2){v_l,l',jn+l''}; & \dotsb & \node[vertex](2n+1-k-2n+1){(r-j)n+k,rn+k}; 
\\ };
    \path[-,font=\scriptsize]
    (b-1-k) edge (1-k+1)
    (a-1-k+2) edge (1-k+1)
    (a-1-k+2) edge (m-1-4)
    (m-1-4) edge (2n+1-k-2n+1)
    ;
    \path[dotted,font=\scriptsize]
    ;
  \end{tikzpicture}
 \endpgfgraphicnamed
 \]
 where the relevant triple $(v_{l-1},l,jn+l-1)$ (cf.\ \cite[section 2.1]{LW15a}) appears as a vertex if and only if the arrow $v_{l-1}$ is oriented towards $q_l$, which is the case for preprojective $X$, and where $l'$ and $l''$ are $l$ or $l+1$, depending on the orientation of $v_l$.
 
 Each patch $\Xi_j$ is an extremal path (cf.\ \cite[Def.\ 2.35]{LW15a}). By Corollary 2.37 in \cite{LW15a}, each extremal path has an extremal solution (cf.\ \cite[Def.\ 2.27]{LW15a}), and therefore by Corollary 2.34 in \cite{LW15a}, the reduced Schubert system $\overline\Sigma$ is totally solvable (cf.\ \cite[section 2.8]{LW15a}). By Corollary 2.20 in \cite{LW15a}, the quiver Grassmannian $\Gr_\ue(X)$ has a cell decomposition into affine spaces whose cells are labelled by the non-contradictory subsets $\beta$ of $\Gamma_0$ (cf.\ \cite[section 2.3]{LW15a}). Since $\pi:\Gamma\to Q$ is unramified, $\beta$ is non-contradictory if and only it is successor closed, thus the theorem.
\end{proof}

\begin{rem}
 Note that the characterization of the Euler characteristic in terms of successor closed subsets is not new. Haupt proves this result for any unramified tree module in \cite{Haupt12}, using an idea of Cerulli Irelli from \cite{Cerulli11}. 
 \end{rem}                                                
\subsection{$F$-polynomials}
In this section, we calculate the generating function of Euler characteristics of quiver Grassmannians for some representations of extended Dynkin quivers. The methods are analogous to those of \cite[Section 1.7, Section 4]{LW15b}. Recall that for a representation $X$, its $F$-polynomial $F_X\in\C[x_q\mid q\in Q_0]$ is defined by
\[F_X:=\sum_{\ue\in\N Q_0}\chi(\Gr_{\ue}(X))x^{\ue}\]
where $x^{\ue}:=\prod_{q\in Q_0}x_q^{e_q}$.

First we investigate $F$-polynomials of representations from homogeneous tubes. Thus let $X_{n\delta}$ be any indecomposable representation of an extended Dynkin quiver which lies in a homogeneous tube and which is of dimension $n\delta$. Moreover, we denote by $F_{n\delta}$ its $F$-polynomial. Note that this notation is not misleading because we have $F_{X_{n\delta}}=F_{X'_{n\delta}}$ for two representations of dimension $n\delta$ from two different homogeneous tubes. Moreover, define
\[z=\frac{1}{2}\sqrt{F_{\delta}^2-4x^{\delta}},\quad\lambda_{\pm}=\frac{F_{\delta}}{2}\pm z.\]

As a consequence of Theorem \ref{thm: cell deomposition for homogeneous tubes}, we obtain the following result, see \cite[Corollary 1.23, Corollary 4.12]{LW15b}:
\begin{thm}\label{Fpoly}
Let $F_{X_{-1}}=F_{X_0}=1$. For $n\geq 1$ we have
\[F_{n\delta}=F_\delta F_{(n-1)\delta}-x^\delta F_{(n-2)\delta}=\frac{1}{2z}(\lambda_+^{n+1}-\lambda_-^{n+1}).\]
\end{thm}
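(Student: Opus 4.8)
The plan is to prove the two equalities in turn. The first, the three-term recursion $F_{n\delta}=F_\delta F_{(n-1)\delta}-x^\delta F_{(n-2)\delta}$, carries the representation-theoretic content; the second, the closed Binet-type formula, is then a purely formal consequence of solving this linear recurrence.

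For the recursion I would argue as in \cite[Section~1.7]{LW15b}, the point being that that argument uses nothing about type $D$ beyond the internal structure of a homogeneous tube together with the existence of a cell decomposition into affine spaces---and the latter is now available for an arbitrary tame $Q$ by Theorem~\ref{thm: cell deomposition for homogeneous tubes}. Concretely, inside a homogeneous tube the indecomposable $X_{n\delta}$ is uniserial with every regular composition factor isomorphic to the quasi-simple $X_\delta$, so it fits into a short exact sequence $\ses{X_{(n-1)\delta}}{X_{n\delta}}{X_\delta}$ with quasi-socle $X_\delta$ and quotient $X_{(n-1)\delta}$. I would stratify $\Gr_\ue(X_{n\delta})$ by recording, for a subrepresentation $U$, the dimension vectors of $U\cap X_{(n-1)\delta}$ and of the image of $U$ in $X_\delta$, and then compute $\chi$ stratum by stratum. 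Since the cell decomposition places all cohomology in even degrees, $\chi$ is additive over locally closed strata and multiplicative along the affine-space fibrations relating the strata to the quiver Grassmannians of $X_\delta$ and $X_{(n-1)\delta}$; assembling the contributions and reading off the coefficient of each monomial $x^\ue$ produces $F_\delta F_{(n-1)\delta}$ together with a correction term $-x^\delta F_{(n-2)\delta}$ coming from the strata supported on the common quasi-socle. This Euler-characteristic bookkeeping is the step I expect to be the main obstacle: for $\ue$ not a multiple of $\delta$ the geometry of $\Gr_\ue(X_{n\delta})$ is genuinely intricate, and the delicate point is to verify that the relevant strata really are iterated affine bundles over the lower Grassmannians, so that the Euler characteristics multiply as asserted.

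Granting the recursion, the closed form is immediate. Because $z^2=\tfrac14 F_\delta^2-x^\delta$, the quantities $\lambda_\pm=\tfrac{F_\delta}{2}\pm z$ satisfy $\lambda_++\lambda_-=F_\delta$ and $\lambda_+\lambda_-=x^\delta$, so they are precisely the two roots of $T^2-F_\delta T+x^\delta$, the characteristic polynomial of the recurrence $u_n=F_\delta u_{n-1}-x^\delta u_{n-2}$. Hence each sequence $n\mapsto\lambda_\pm^{\,n+1}$ satisfies this recurrence, and therefore so does $G_n:=\tfrac{1}{2z}(\lambda_+^{\,n+1}-\lambda_-^{\,n+1})$. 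A direct evaluation gives $G_0=\tfrac{1}{2z}(\lambda_+-\lambda_-)=1=F_{X_0}$ and $G_1=\tfrac{1}{2z}(\lambda_+-\lambda_-)(\lambda_++\lambda_-)=F_\delta=F_{1\cdot\delta}$, which are exactly the values of $F_{n\delta}$ at $n=0$ and $n=1$ (the latter by the very definition of $F_\delta$). Since $F_{n\delta}$ and $G_n$ obey the same second-order recurrence and agree at $n=0$ and $n=1$, an immediate induction yields $F_{n\delta}=G_n$ for all $n\geq 0$, which is the desired formula.
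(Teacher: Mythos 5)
Your proposal is correct, and its load-bearing step is exactly the paper's own proof: the paper establishes Theorem \ref{Fpoly} by observing that the arguments of \cite[Corollary 1.23, Corollary 4.12]{LW15b} use nothing about type $D$ beyond the cell decomposition for homogeneous tubes, which Theorem \ref{thm: cell deomposition for homogeneous tubes} now supplies for every tame quiver. Where you diverge is in the mechanism you sketch for the recursion. The proofs in \cite{LW15b} (and the analogous Lemma for preprojective Kronecker modules in this paper) are combinatorial: the cells are labelled by successor closed subsets, and the recursion is an inclusion--exclusion count of labels, where pairs of labels that fail to glue are in bijection with labels for $X_{(n-2)\delta}$, producing the term $-x^\delta F_{(n-2)\delta}$. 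Your sketch instead stratifies $\Gr_\ue(X_{n\delta})$ geometrically along the sequence $\ses{X_{(n-1)\delta}}{X_{n\delta}}{X_\delta}$ using the generalization of \cite[Lemma 3.11]{cc}, in the style of Proposition \ref{notsmooth}. This route can be made to work, but the crucial point---that the locus of pairs $(A,V)$ which do \emph{not} lift to a subrepresentation of $X_{n\delta}$ has $\chi$-generating function exactly $x^\delta F_{(n-2)\delta}$---is precisely what you flag as the ``main obstacle'' and do not carry out; since your primary justification is the citation, this creates no gap relative to the paper's own standard, but as a standalone argument it is incomplete at that point. Your derivation of the closed Binet-type formula is complete and correct, and is essentially the same linear algebra the paper performs for the Kronecker preprojectives by diagonalizing the companion matrix $\left(\begin{smallmatrix}0&1\\-x^\delta&F_\delta\end{smallmatrix}\right)$; your version via the characteristic roots is slightly more direct.

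One further point in your favor: as literally stated, the theorem's convention $F_{X_{-1}}=F_{X_0}=1$ makes the $n=1$ instance of the recursion read $F_\delta=F_\delta-x^\delta$, which is false; the closed formula forces $F_{(-1)\delta}=\tfrac{1}{2z}(\lambda_+^0-\lambda_-^0)=0$, so the convention should be $F_{X_{-1}}=0$. Your induction verifies $n=0$ and $n=1$ directly against the closed form and invokes the recursion only for $n\geq 2$, where all terms are genuine $F$-polynomials, so it silently repairs this inconsistency.
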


We also want to describe how to obtain the $F$-polynomials for the representations of $K(2)$ in a rather straightforward way. We get results which are comparable to those obtained in \cite[Section 4]{LW15b}. Note that the case $\tilde A_n$ is a bit more tedious than the case of the Kronecker quiver. But it is also treatable with the methods we present here or in \cite[Section 4]{LW15b}. 

Let $P_0$ and $P_1$ with $\udim P_0=(1,2)$ and $\udim P_1=(0,1)$ be the indecomposable projective representations of $K(2)$ where we denote the vertices by $0$ and $1$ and the arrows by $a$ and $b$. Then every preprojective representation is an Auslander-Reiten translate of either $P_0$ or $P_1$ and thus of dimension $(n,n+1)$ for some $n\geq 2$. We denote it by $X_n$. It has a coefficient quiver of the form
\[\xymatrix{&s_1\ar[ld]^a\ar[rd]^b&&s_2\ar[ld]^a\ar[rd]^b&&\ldots&&s_n\ar[ld]^a\ar[rd]^b&\\t_1&&t_2&&t_3&\ldots&t_n&&t_{n+1}&}\] 
 with $n$ sources and $n+1$ sinks. We denote the corresponding basis by $\cB_n$.

In order to determine the Euler characteristic $\chi(\Gr_{(c,d)}(X_n))$, we have to count the number of successor closed subsets of $\cB_n$ of type $(c,d)$, i.e. with $c$ sources and $d$ sinks. Let $x=x_0$ and $y=x_1$. Then we obtain the following recursive formula:
\begin{lemma}For the $F$-polynomials of preprojective representations of $K(2)$ we have
\[F_{X_n}=(1+y+xy)F_{X_{n-1}}-xyF_{X_{n-2}}=F_\delta F_{X_{n-1}}-x^\delta F_{X_{n-2}}\]for $n\geq 1$ and where $F_{X_{-1}}:=1$ and $F_{X_0}=1+y$.
\end{lemma}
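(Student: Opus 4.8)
The plan is to translate the statement into a count of successor closed subsets and then to establish the three-term recursion by a transfer-type decomposition of the coefficient quiver $\Gamma_n=\Gamma(X_n,\cB_n)$. By Theorem~\ref{thm: cell decomposition for string modules}, $\chi(\Gr_{(c,d)}(X_n))$ equals the number of successor closed subsets $\beta\subseteq\Gamma_{n,0}$ of type $(c,d)$, that is, with exactly $c$ sources and $d$ sinks. Writing $S_n=\{s_1,\dots,s_n\}$, $T_n=\{t_1,\dots,t_{n+1}\}$ and $G_n:=F_{X_n}$, this gives $G_n=\sum_\beta x^{|\beta\cap S_n|}\,y^{|\beta\cap T_n|}$, the sum running over all successor closed $\beta$. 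Since $s_i\xrightarrow{a}t_i$ and $s_i\xrightarrow{b}t_{i+1}$ and every arrow of $\Gamma_n$ runs from a source to a sink, successor closedness only constrains sources: $s_i\in\beta$ forces $t_i,t_{i+1}\in\beta$, while the sinks are unconstrained. Thus the whole problem becomes a generating-function count on the ``fence'' $t_1,s_1,t_2,\dots,s_n,t_{n+1}$.

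Next I would introduce the auxiliary generating function $\widehat{G}_n$, summing the same weights over those successor closed $\beta$ that contain the last sink $t_{n+1}$. First split $G_n$ according to whether $t_{n+1}\in\beta$. If $t_{n+1}\notin\beta$, then the arrow $b\colon s_n\to t_{n+1}$ forces $s_n\notin\beta$, so the remaining vertices $s_1,\dots,s_{n-1},t_1,\dots,t_n$ together with their arrows are exactly the coefficient quiver $\Gamma_{n-1}$ of $X_{n-1}$, contributing $G_{n-1}$; the case $t_{n+1}\in\beta$ contributes $\widehat{G}_n$. Hence $G_n=G_{n-1}+\widehat{G}_n$. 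Then split $\widehat{G}_n$ (where $t_{n+1}$ already carries the factor $y$) according to whether $s_n\in\beta$: if $s_n\notin\beta$ the rest is an unrestricted successor closed subset of $\Gamma_{n-1}$, giving $y\,G_{n-1}$; if $s_n\in\beta$ then $t_n\in\beta$ is forced, contributing $xy$, and the remaining part is a successor closed subset of $\Gamma_{n-1}$ \emph{containing its last sink} $t_n$, which is precisely $\widehat{G}_{n-1}$. This yields $\widehat{G}_n=y\,G_{n-1}+xy\,\widehat{G}_{n-1}$.

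It then remains to eliminate the auxiliary function. From $G_n=G_{n-1}+\widehat{G}_n$ I get $\widehat{G}_n=G_n-G_{n-1}$ and $\widehat{G}_{n-1}=G_{n-1}-G_{n-2}$; substituting into $\widehat{G}_n=y\,G_{n-1}+xy\,\widehat{G}_{n-1}$ gives $G_n-G_{n-1}=y\,G_{n-1}+xy\,(G_{n-1}-G_{n-2})$, i.e. $G_n=(1+y+xy)G_{n-1}-xy\,G_{n-2}$, which is the first displayed equality. The second equality is immediate from $\delta=(1,1)$, since $x^\delta=xy$ and $F_\delta=1+y+xy$ (the latter computed directly from a quasi-simple regular representation of dimension $(1,1)$, whose quiver Grassmannians $\Gr_{(0,0)}$, $\Gr_{(0,1)}$, $\Gr_{(1,1)}$ are reduced points while $\Gr_{(1,0)}$ is empty).

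I expect the main obstacle to be purely bookkeeping at the boundary rather than anything structural: one must verify that deleting the last source and sink genuinely reproduces $\Gamma_{n-1}$, that ``$\widehat{G}$ with the last sink forced in'' matches the auxiliary function one index lower, and that the recursion is correctly anchored at $n=1$. The latter forces the conventions $F_{X_0}=1+y$ (the quiver $\Gamma_0$ is the single sink $t_1$, with successor closed subsets $\emptyset$ and $\{t_1\}$) and $F_{X_{-1}}=1$, equivalently $\widehat{G}_0=y$ so that $G_0=G_{-1}+\widehat{G}_0$ holds; a direct check of $G_1=1+2y+y^2+xy^2$ against $(1+y+xy)G_0-xy\,G_{-1}$ confirms that the recursion is valid from $n=1$ onward.
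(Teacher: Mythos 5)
Your proposal is correct and follows essentially the same route as the paper: both reduce the claim, via Theorem \ref{thm: cell decomposition for string modules}, to counting successor closed subsets of the coefficient quiver weighted by type, and both derive the recursion by peeling off the last piece $s_n\xrightarrow{b}t_{n+1}$ of $\cB_n$. The only difference is bookkeeping: the paper compares successor closed subsets of $\cB_n$ with pairs of successor closed subsets of $\cB_{n-1}$ and $\{s_n,t_{n+1}\}$ and subtracts the failing pairs, which are enumerated by $xy\,F_{X_{n-2}}$, whereas you introduce the auxiliary generating function $\widehat{G}_n$ and eliminate it from two coupled first-order recursions --- the same identity, organized as a transfer-matrix computation rather than inclusion-exclusion.
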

\begin{proof}
Every successor closed subset of $\cB_n$ yields a pair of successor closed subsets of $\cB_{n-1}$ and the basis $\{s_n,t_{n+1}\}$ of the representation $T_b$ with coefficient quiver $s_n\xrightarrow{b} t_{n+1}$. Note that the coefficient quiver of $X_n$ is obtained by glueing these two coefficient quivers by the arrow $a$. Moreover, we have $F_{T_b}=1+y+xy$. The other way around a pair $(S,T)$ of successor closed subsets of $\cB_{n-1}$ and $\{s_n,t_{n+1}\}$ does {\it not} give rise to a successor closed subset of $\cB_n$ if and only if $T=\{s_n,t_{n+1}\}$ and $S$ does not contain $t_n$. But this already means that it does not contain $s_{n-1}$. In turn $S$ is already a successor closed subset of $\cB_{n-2}$.
\end{proof}

As we have $xy=x^\delta$ and $F_\delta=1+y+xy$, with $z$ and $\lambda_{\pm}$ as above, we obtain
\begin{eqnarray*}\begin{pmatrix}F_{X_{n}}\\F_{X_{n+1}}\end{pmatrix}&=&\begin{pmatrix}0&1\\-x^{\delta}&F_{\delta}\end{pmatrix}^{n+1}\begin{pmatrix}F_{X_{-1}}\\F_{X_0}\end{pmatrix}\\&=&\frac{1}{-2z}\begin{pmatrix}-1&-1\\-\lambda_+&-\lambda_-\end{pmatrix}\begin{pmatrix}\lambda_+&0\\0&\lambda_-\end{pmatrix}^{n+1}\begin{pmatrix}-\lambda_-&1\\\lambda_+&-1\end{pmatrix}\begin{pmatrix}F_{X_{-1}}\\F_{X_0}\end{pmatrix}
\end{eqnarray*}
Thus we get
\[F_{X_n}=\frac{1}{2z}\begin{pmatrix}\lambda_-^{n+1}\lambda_+-\lambda_+^{n+1}\lambda_-, &\lambda^{n+1}_+-\lambda^{n+1}_-\end{pmatrix}\begin{pmatrix}F_{X_{-1}}\\F_{X_0}\end{pmatrix}.\]
Applying Theorem \ref{Fpoly} and, moreover, $\lambda_+\lambda_-=x^\delta$, we obtain the following result:
\begin{thm}\label{thm: F-polynomials for Kronecker}
 For the $F$-polynomial of the preprojective representations of $K(2)$, we have \[F_{X_n}=F_{n\delta}F_{X_0}-x^\delta F_{(n-1)\delta}.\]
\end{thm}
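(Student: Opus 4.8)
The plan is to derive the closed formula $F_{X_n}=F_{n\delta}F_{X_0}-x^\delta F_{(n-1)\delta}$ from the explicit expressions already assembled immediately above the statement, treating it as a matching-up of two closed forms rather than a fresh induction. The preceding computation expresses $F_{X_n}$ as
\[
 F_{X_n}=\frac{1}{2z}\bigl(\lambda_-^{n+1}\lambda_+-\lambda_+^{n+1}\lambda_-\bigr)F_{X_{-1}}+\frac{1}{2z}\bigl(\lambda_+^{n+1}-\lambda_-^{n+1}\bigr)F_{X_0},
\]
while Theorem \ref{Fpoly} gives $F_{m\delta}=\frac{1}{2z}(\lambda_+^{m+1}-\lambda_-^{m+1})$ for every $m$. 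So the second summand is already visibly $F_{n\delta}F_{X_0}$, and the whole task reduces to identifying the first summand with $-x^\delta F_{(n-1)\delta}$ (using $F_{X_{-1}}=1$).

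First I would substitute $F_{X_{-1}}=1$ and rewrite the coefficient of $F_{X_{-1}}$ as $\tfrac{1}{2z}\lambda_+\lambda_-(\lambda_-^{n}-\lambda_+^{n})$. Then I would invoke the relation $\lambda_+\lambda_-=x^\delta$, recorded just before the statement, to pull out the factor $x^\delta$, leaving
\[
 \frac{x^\delta}{2z}\bigl(\lambda_-^{n}-\lambda_+^{n}\bigr)=-x^\delta\cdot\frac{1}{2z}\bigl(\lambda_+^{n}-\lambda_-^{n}\bigr).
\]
Comparing with the formula of Theorem \ref{Fpoly} at index $m=n-1$, namely $F_{(n-1)\delta}=\frac{1}{2z}(\lambda_+^{n}-\lambda_-^{n})$, this is exactly $-x^\delta F_{(n-1)\delta}$, which is the remaining term of the claimed identity.

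The only genuinely substantive point in the argument is the algebraic manipulation $\lambda_-^{n+1}\lambda_+-\lambda_+^{n+1}\lambda_-=\lambda_+\lambda_-(\lambda_-^{n}-\lambda_+^{n})=x^\delta(\lambda_-^{n}-\lambda_+^{n})$, together with the bookkeeping on the exponents $n+1$ versus $n$ so that the two applications of Theorem \ref{Fpoly} land at the correct indices $n$ and $n-1$. Everything else is substitution of the three already-established facts ($F_{X_{-1}}=1$, $\lambda_+\lambda_-=x^\delta$, and the closed form of Theorem \ref{Fpoly}). I do not anticipate a real obstacle here; the entire content is the factorization of the coefficient of $F_{X_{-1}}$ and the recognition of each closed form, so the proof is short and amounts to collecting the pieces that the preceding paragraph has already laid out.
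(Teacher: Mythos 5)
Your proposal is correct and is precisely the paper's own argument: the paper derives the matrix/eigenvalue expression for $F_{X_n}$ and then states that the theorem follows by ``applying Theorem \ref{Fpoly} and $\lambda_+\lambda_-=x^\delta$'', which is exactly the factorization $\lambda_-^{n+1}\lambda_+-\lambda_+^{n+1}\lambda_-=x^\delta(\lambda_-^{n}-\lambda_+^{n})=-2z\,x^\delta F_{(n-1)\delta}$ and the identification $\frac{1}{2z}(\lambda_+^{n+1}-\lambda_-^{n+1})=F_{n\delta}$ that you spell out. Your write-up simply makes explicit the index bookkeeping that the paper leaves to the reader; there is no gap.
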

Thus the $F$-polynomial depends only on the $F$-polynomials of the homogeneous tubes and of the simple projective representation. This phenomenon can also be found in the case of extended Dynkin quivers of type $\tilde D_n$. It is likely that one obtains similar formulae in the general case $\tilde A_n$.

%%%%%%%%%%%%%%%%%%%%%%%%%%%%%%%%%%%%%%%%%%%%%%%%%%%%%%%%%%%%%%%%%%%%%%%%%%%%%%%%%%%%%%%%%%%%%%%%%%%%%%%%%%%%%%%%%%%%%%%%%%%%%%%%%%%%%%%%%%%%%%%%%%%%%%%%%%%%%%%%%%%%%
%%%%%%%%%%%%%%%%%%%%%%%%%%%%%%%%%%%%%%%%%%%%%%%%%%%%%%%%%%%%%%%%%%%%%%%%%%%%%%%%%%%%%%%%%%%%%%%%%%%%%%%%%%%%%%%%%%%%%%%%%%%%%%%%%%%%%%%%%%%%%%%%%%%%%%%%%%%%%%%%%%%%%

\section{Singular quiver Grassmannians for tame quivers}

In this section, we prove that every tame quiver $Q$ admits a quiver Grassmannian with singularities. 

\subsection{Tame quivers with at least three vertices}
With exception of the Kronecker quiver, every tame acyclic quiver has a tube of rank $n\geq2$. We utilize this fact to exhibit singular quiver Grassmannians of a small dimensional representation in such an exceptional tube. By $X_{S,n\delta}$, we denote the unique indecomposable representation of dimension $n\delta$ in an exceptional tube $\mathcal T$ which has the quasi-simple representation $S$ as a subrepresentation.

\begin{lemma}\label{lem1}
 Let $\mathcal T$ be an exceptional tube of rank two. Then there exists a quasi-simple representation $S$ in $\mathcal T$ and a projective subrepresentation $P$ of $\tau^{-1} S$ of defect $\Sc{\delta}{\udim P}=-1$ such that $P\in{}^\perp S$ and $\Ext(S,P)\cong\Ext(X_{S,\delta},P)$. 
\end{lemma}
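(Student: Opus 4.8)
The plan is to realize $P$ as an indecomposable projective $P_q$ embedded into $S':=\tau^{-1}S$, choosing the vertex $q$ so that every required property reduces to a single combinatorial condition. Denote the two quasi-simples of $\mathcal T$ by $S$ and $S'=\tau^{-1}S$; recall that $\underline{\dim}S+\underline{\dim}S'=\delta$ and that $X_{S,\delta}$ is the middle term of the nonsplit sequence $\ses{S}{X_{S,\delta}}{S'}$ (regular socle $S$, regular top $S'$). I would begin by fixing a vertex $q$ with $\delta_q=1$ that lies in the support of one of the two quasi-simples. Since $S_q+S'_q=\delta_q=1$, exactly one of them is supported at $q$; labelling that one as $S'$ (and the other as $S$) forces $S_q=0$ and $S'_q=1$.

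With this choice the numerics are immediate. Since $P_q$ is projective, $\langle\underline{\dim}P_q,\beta\rangle=\beta_q$ for every $\beta$; and because $\delta$ spans the radical of the symmetrized Euler form, $\langle\delta,\beta\rangle=-\langle\beta,\delta\rangle$ for all $\beta$. Combining these yields $\langle\delta,\underline{\dim}P_q\rangle=-\langle\underline{\dim}P_q,\delta\rangle=-\delta_q=-1$, so $P_q$ has defect $-1$. Moreover $\Hom(P_q,S)=S_q=0$ and $\Ext(P_q,S)=0$ by projectivity, whence $P_q\in{}^\perp S$.

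The final isomorphism then drops out homologically. Applying $\Hom(-,P_q)$ to $\ses{S}{X_{S,\delta}}{S'}$ produces the exact tail $\Ext(S',P_q)\to\Ext(X_{S,\delta},P_q)\to\Ext(S,P_q)\to0$, in which the map on the right is the restriction induced by $S\hookrightarrow X_{S,\delta}$. By the Auslander-Reiten formula $\Ext(S',P_q)\cong D\Hom(P_q,\tau S')=D\Hom(P_q,S)=0$, since $\tau S'=S$ and $\Hom(P_q,S)=0$; therefore $\Ext(X_{S,\delta},P_q)\to\Ext(S,P_q)$ is an isomorphism, as required.

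The one point that is not formal, and which I expect to be the main obstacle, is guaranteeing that $P_q$ really is a \emph{subrepresentation} of $S'$: since $\dim\Hom(P_q,S')=S'_q=1$, there is a unique nonzero map $P_q\to S'$ up to scalar, and one must show it is injective. I would read this off from the explicit description of the quasi-simples of a rank-two tube. If $q$ may be taken to be a sink of $Q$, then $P_q$ is the simple module at $q$, which embeds into any representation with nonzero $q$-component, and there is nothing to prove. The delicate case is when every vertex with $\delta=1$ in the support of $S'$ is a source (as already occurs for the all-arrows-into-the-centre orientation of $\tilde D_4$); there I would verify injectivity directly, showing that the generator of $S'_q$ generates a submodule isomorphic to $P_q$ because the structure maps of the brick $S'$ issuing from $q$ are injective. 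Making this last step uniform across the tame types $\tilde A$, $\tilde D$ and $\tilde E$, rather than inspecting the tubes case by case, is where the combinatorics of the quasi-simple modules genuinely enters.
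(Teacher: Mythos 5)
Your numerical and homological steps are correct, and in places cleaner than the paper's: the defect computation $\Sc{\delta}{\udim P_q}=-\delta_q$, the perpendicularity $\Hom(P_q,S)=\dim S_q=0$ combined with projectivity, and the Auslander-Reiten-formula argument $\Ext(\tau^{-1}S,P_q)\cong D\Hom(P_q,S)=0$, which makes the restriction $\Ext(X_{S,\delta},P_q)\to\Ext(S,P_q)$ an isomorphism, are all fine. But the step you set aside as ``the one point that is not formal'' is not a deferrable verification: it is the entire content of the lemma, and your proposed way of handling it does not work. Note first that your selection rule is vacuous: since $S_q+S'_q=\delta_q=1$, \emph{every} vertex with $\delta_q=1$ lies in the support of exactly one quasi-simple, so nothing singles out a good $q$. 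And for a bad $q$ the problem is not injectivity of the unique map $P_q\to S'$ --- there is no embedding for brute dimension reasons. Concretely, take $Q$ of type $\widetilde A_2$ with arrows $1\to 2$, $3\to 2$, $3\to 1$ (this is $\widetilde A_{2,1}$ in the paper's notation, with $q_1=3$, $s_1=1$, $q_2=2$). By the paper's own description of the exceptional tubes, the quasi-simples of the rank-two tube are the simple $S_1$ at vertex $1$ and the indecomposable $N$ with $\udim N=(0,1,1)$. All three vertices satisfy $\delta_q=1$, but for $q=1$ one has $\udim P_1=(1,1,0)\not\leq(1,0,0)$, and for $q=3$ (a source) one has $\udim P_3=(1,2,1)\not\leq(0,1,1)$; in both cases $P_q$ cannot be a subrepresentation of the quasi-simple supported at $q$. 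Only $q=2$ works. Your criterion for the source case --- that ``the structure maps of the brick $S'$ issuing from $q$ are injective'' --- is exactly what fails here (the map $N_3\to N_1=0$ is zero), and you give no argument that some vertex satisfying it exists; nor can you always fall back on the sink case, since there are orientations (already for $\widetilde D_n$) in which no sink has $\delta_q=1$.

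What is missing is therefore a uniform existence statement: \emph{some} vertex $q$ with $\delta_q=1$, together with a labelling of the two quasi-simples, such that $P_q$ embeds into $\tau^{-1}S$. This is precisely what the paper proves, and it does so without inspecting tubes or orientations: it takes a projective subrepresentation $P$ of $X_{S,\delta}$ of defect $-1$ and of minimal dimension, intersects it with the Auslander-Reiten sequence $\ses{S}{X_{S,\delta}}{\tau^{-1}S}$ to obtain $\ses{P'}{P}{P''}$, and uses additivity of the defect, minimality, and a factorization argument (via $\Ext(P'',S)=0$) to force $P'=0$ or $P''=0$, i.e.\ $P$ embeds into one of the two quasi-simples; relabelling $S$ and $\tau^{-1}S$ if necessary gives the lemma. (Note that a projective of defect $-1$ is automatically indecomposable, hence of the form $P_q$ with $\delta_q=1$, so the paper's $P$ is the same object you are after --- the difference is that the paper proves it lands inside a quasi-simple rather than assuming so.) To salvage your approach you would have to replace the injectivity heuristic by an argument of this kind, or by an honest case-by-case analysis of all rank-two tubes and all orientations; choosing $q$ first and hoping $P_q$ fits is refuted by the example above.
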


\begin{proof}
 Let $T=\tau S=\tau^{-1} S$. Then we have $\Hom(S,T)=0=\Hom(T,S)$ and there exists an Auslander-Reiten sequence $\ses{S}{X_{S,\delta}}{T}$. If $P$ is a subrepresentation of $X_{S,\delta}$, we have a commutative diagram
 \[\xymatrix{0\ar[r] &S\ar[r] & X_{S,\delta}\ar[r] & T\ar[r] & 0 \\
  0\ar[r]& P'\ar[r]\ar[u] & P\ar[u]\ar[r]& P''\ar[r]\ar[u]& 0}\] 
If $P''$ is a proper subrepresentation of $T$ - because $T$ is regular and quasi-simple - it cannot be preinjective or regular which means that it has negative defect. The same holds for $P'$. Let $P$ be a projective subrepresentation of $X_{S,\delta}$ of minimal dimension among those projective subrepresentations satisfying the condition $\delta(P)=-1$ (which exists for every tame quiver). This yields $\delta(P'')=-1$ or $\delta(P')=-1$ because the defect is additive on exact sequences and $\delta(S)=\delta(T)=0$. By minimality, $P''=T$ is not possible. Also the case $P'=S$ is not possible because the embedding $P''\to T$ factors through $X_{S,\delta}$ because $\Ext(P'',S)=0$. This already shows that $P''=0$ or $P'=0$. In turn, either $S$ or $T$ has a projective subrepresentation of defect $-1$.

 Thus we might assume that $P$ is an indecomposable projective subrepresentation of $T$ of defect $-1$ (otherwise we may consider the exact sequence $\ses{T}{X_{T,\delta}}{S}$ together with the same projective representation $P$). Then the cokernel $I:=T/P$ has defect $1$ and is preinjective because $T$ is quasi-simple. In particular, it is indecomposable. Indeed, every summand of $I$ must have positive defect. Since $I$ is preinjective, we have $\Hom(I,T)=0$ and thus $\dim\Ext(I,S)\leq\dim\Ext(I,X_{S,\delta})=1$. Considering the long exact sequence
 \[0\to \Hom(I,S)\to\Hom(T,S)\to\Hom(P,S)\to\Ext(I,S)\to\Ext(T,S)\to\Ext(P,S)=0\]
 we obtain $\Ext(I,S)=\Ext(T,S)=\C$ and thus $0=\Hom(T,S)=\Hom(P,S)$. This means $P\in{}^\perp S$.

 Since $\mathcal T$ is of rank two, we have $\Ext(S,T)=\C$. Thus it follows that $\dim\Ext(S,P)\geq 1$ because $\Ext(S,I)=0$. As $P$ is of defect $-1$, we have $\Ext(X_{S,\delta},P)=\C$ which yields $\Ext(X_{S,\delta},P)\cong\Ext(S,P)$.
\end{proof}

\begin{prop}\label{notsmooth}
 Let $\mathcal T$ be a tube of rank two and assume that $S,T=\tau^{-1}S, P$ and $X_{S,\delta}$ are as constructed in Lemma \ref{lem1}. Moreover, consider the short exact sequence sequence $\ses{X_{S,\delta}}{X_{S,2\delta}}{X_{S,\delta}}$. Then the quiver Grassmannian $\Gr_{\udim P+\udim S}(X_{S,2\delta})$ is not smooth as a variety.
\end{prop}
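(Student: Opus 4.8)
The plan is to use the standard infinitesimal description of quiver Grassmannians, in the spirit of Caldero--Reineke \cite{cr}: for a subrepresentation $N\subseteq Y$ with $\udim N=\ue$, write $Y=X_{S,2\delta}$ and $\ue=\udim S+\udim P$; then the Zariski tangent space of $\Gr_\ue(Y)$ at $[N]$ is $\Hom(N,Y/N)$, and $[N]$ is a smooth point of local dimension $\Sc{\ue}{2\delta-\ue}$ as soon as $\Ext(N,Y/N)=0$. Since the Euler form gives
\[
 [N,Y/N]-\dim\Ext(N,Y/N)=\Sc{\ue}{2\delta-\ue},
\]
the tangent space at $[N]$ has dimension $\Sc{\ue}{2\delta-\ue}+\dim\Ext(N,Y/N)$. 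It therefore suffices to produce one point $[N_0]$ with $\Ext(N_0,Y/N_0)\neq0$ and to show that $\Gr_\ue(Y)$ has local dimension $\Sc{\ue}{2\delta-\ue}$ at $[N_0]$: then the tangent space strictly exceeds the local dimension, so $[N_0]$ is singular.

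For the distinguished point I would use the sub-copy $A=X_{S,\delta}$ of $Y$ coming from the sequence $\ses{X_{S,\delta}}{X_{S,2\delta}}{X_{S,\delta}}$. It surjects onto $T=\tau^{-1}S$ with kernel $S$, and I take $N_0\subseteq A$ to be the preimage of $P\subseteq T$, so that $\udim N_0=\udim S+\udim P=\ue$. The resulting extension $\ses{S}{N_0}{P}$ is classified by an element of $\Ext(P,S)=0$ (as $P$ is projective), hence splits, giving $N_0\cong S\oplus P$. Now $Y/N_0$ sits in $\ses{T/P}{Y/N_0}{X_{S,\delta}}$, and since $P$ is projective one has $\Ext(N_0,Y/N_0)=\Ext(S,Y/N_0)$. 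Applying $\Hom(S,-)$ to the sequence defining $Y/N_0$ yields a surjection $\Ext(S,Y/N_0)\twoheadrightarrow\Ext(S,X_{S,\delta})$, and the target is nonzero: as $S$ is the regular socle of $X_{S,\delta}$ we have $[S,X_{S,\delta}]=1$, while $\Sc{\udim S}{\delta}=0$ forces $\dim\Ext(S,X_{S,\delta})=[S,X_{S,\delta}]=1$. Hence $\Ext(N_0,Y/N_0)\neq0$, and the tangent space at $[N_0]$ has dimension at least $\Sc{\ue}{2\delta-\ue}+1$.

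The hard part is the dimension count: showing $\dim_{[N_0]}\Gr_\ue(Y)=\Sc{\ue}{2\delta-\ue}$, so that the tangent jump above really detects a singularity rather than merely a component of larger dimension. For this I would exhibit, in the same irreducible component as $[N_0]$, a subrepresentation $N_1$ with $\Ext(N_1,Y/N_1)=0$; such a point is smooth of local dimension $\Sc{\ue}{2\delta-\ue}$, which pins down the dimension of that component. This is where the remaining hypothesis of Lemma \ref{lem1} enters: the isomorphism $\Ext(S,P)\cong\Ext(X_{S,\delta},P)$ says that the obstructing extension of $S$ by $P$ is already realized through $X_{S,\delta}$, and hence inside $Y=X_{S,2\delta}$, allowing one to deform the split subrepresentation $N_0=S\oplus P$ to a nonsplit, unobstructed subrepresentation $N_1$ of the same dimension lying in the same component. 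Verifying that $N_0$ and $N_1$ sit on a single component of dimension $\Sc{\ue}{2\delta-\ue}$ --- for instance by writing down an explicit one-parameter family of dimension-$\ue$ subrepresentations degenerating $N_1$ to $N_0$, or by establishing irreducibility of $\Gr_\ue(Y)$ directly --- is the technical heart of the argument; once it is in place, the strict inequality $\dim\Hom(N_0,Y/N_0)>\dim_{[N_0]}\Gr_\ue(Y)$ gives the non-smoothness.
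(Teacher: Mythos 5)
Your strategy --- exhibit a point whose tangent space exceeds the local dimension --- runs into a problem that is fatal for the statement as formulated, before one even reaches the step you flag as incomplete. The identity $T_{[N]}\Gr_\ue(Y)=\Hom(N,Y/N)$ computes the Zariski tangent space of the quiver Grassmannian with its \emph{scheme} structure; the proposition asserts singularity \emph{as a variety}, i.e.\ for the reduced structure, whose tangent spaces can be strictly smaller. Scheme-singularity does not imply variety-singularity, and the paper is explicitly organized around this distinction: for the Kronecker quiver, $\Gr_{(1,1)}(X)$ of an indecomposable $X$ of dimension $(2,2)$ is a nonreduced point, hence singular as a scheme but regular as a variety; and in its discussion of homogeneous tubes the paper makes the same caveat about $\Gr_{(1,2)}(X_{3\delta})$, namely that an extension group $\Ext(N,Y/N)\neq 0$ shows scheme-singularity via \cite[Proposition 6]{cr} but, absent a proof of reducedness, says nothing about the variety. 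So even granting your dimension count, the inequality $\dim\Hom(N_0,Y/N_0)>\dim_{[N_0]}\Gr_\ue(Y)$ only reproves scheme-singularity at $N_0$, which is exactly the statement the proposition is designed to go beyond. (Your computation that $\Ext(N_0,Y/N_0)\neq0$ for $N_0\cong P\oplus S$ is correct, and this $N_0$ is indeed the singular point --- it is the zero-dimensional cell in the paper's decomposition --- but detecting the singularity on the variety requires a tool that sees only the reduced structure.)

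The second problem is that the step you yourself call ``the technical heart'' --- pinning the local dimension at $[N_0]$ to $\Sc{\ue}{2\delta-\ue}=1$ by degenerating an unobstructed $N_1$ to $N_0$, or by proving irreducibility --- is not carried out; moreover, bounding the dimension of \emph{one} component through $[N_0]$ does not bound the local dimension unless you separately handle the possibility of further components (if $[N_0]$ lay on two distinct components you would be done for a different reason, since such a point is never regular, but your argument never addresses this dichotomy). The paper's proof avoids both issues by working entirely with the variety: it classifies all subrepresentations of dimension $\ue$ (each is isomorphic to $P\oplus S$ or to the nonsplit extension $P'$ of $S$ by $P$), uses the sequence $\ses{X_{S,\delta}}{X_{S,2\delta}}{X_{S,\delta}}$ and the fibre formula $\Psi_\ue^{-1}(A,V)=\A^{[V,X_{S,\delta}/A]}$ to decompose $\Gr_\ue(X_{S,2\delta})$ into one point (your $N_0$) and two affine lines, concludes that the Poincar\'e polynomial is $2q^2+1$ and that the variety is connected, and then invokes Poincar\'e duality, which the asymmetric polynomial $2q^2+1$ violates for any connected smooth projective variety. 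To salvage your approach you would have to either prove that the scheme is reduced at $[N_0]$, or show directly that $[N_0]$ lies on two distinct irreducible components --- and establishing the latter is essentially the cell-by-cell analysis the paper performs.
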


\begin{proof}
 As $\Ext(S,P)=\C$ and $S\in{}^\perp P$, there exists a short exact sequence $\ses{P}{P'}{S}$ with indecomposable middle term. Since $\delta(P')=-1$, the representation $P'$ is preprojective. Consider the map $\Psi_{\ue}:\Gr_\ue(X_{2\delta})\to\coprod_{\underline f+\underline g=\ue}\Gr_{\underline f}(X_{S,\delta})\times\Gr_{\underline g}(X_{S,\delta})$ where $\ue:=\udim S+\udim P$. Every $U\subseteq X_{S,2\delta}$ induces a commutative diagram
 \[\xymatrix{0\ar[r] &X_{S,\delta}\ar[r] & X_{S,2\delta}\ar[r] & X_{S,\delta}\ar[r] & 0 \\
  0\ar[r]& A\ar[r]\ar[u] & U\ar[u]\ar[r]& V\ar[r]\ar[u]& 0}\]

 Let $U\cong U_1\oplus\ldots U_r$ be the direct sum decomposition of $U$. Then each $U_i$ is either preprojective or regular, i.e. $\Sc{\udim U_i}{\delta}\geq 0$. Since $1=\Sc{\udim U}{\delta}=\sum_{i=1}^r\Sc{\udim U_i}{\delta}$, there must be precisely one preprojective summand. We have $\udim U=\udim S+\udim P\leq \udim S+\udim \tau^{-1} S=\delta$, $\Hom(S,X_{S,\delta})=\C$ and, moreover, $S$ is the only quasi-simple subrepresentation of $X_{S,\delta}$. Thus there can be at most one regular direct summand which is forced to be $S$. This yields that $U\cong P\oplus S$ or $U\cong P'$.

 The same holds for $A$ and $V$, i.e.\ they can either be preprojective or regular or a direct sum of both. As the defect is additive, only one of the two representations can have a preprojective direct summand. If one summand is regular, it is forced to be isomorphic to $S$ because it is the only regular subrepresentation of $X_{S,\delta}$. As $U\cong P\oplus S$ or $U\cong P'$, the representations $A$ and $V$ can at most have one regular direct summand in total. Indeed, neither $P$ nor $P'$ have a subrepresentation which is isomorphic to $S$ and, moreover, $\Hom(P,S)=0$ and $\Hom(P',S)=\C$. Thus we obtain $(A,V)\in\mathcal X=\{(0,P'),\,(P',0),\,(P,S),\,(S,P),\,(P\oplus S,0),\,(0,P\oplus S)\}$.

 Clearly, we have $\Gr_{\udim P}(X_{S,\delta})\cong\Gr_{\udim S}(X_{S,\delta})=\{\mathrm{pt}\}$ as $S$ is quasi-simple and $P$ projective of defect $-1$. We have $\Ext(P',X_{S,\delta})=0$ and thus $\Hom(P',X_{S,\delta})=\C$.
 But $P'$ is not a subrepresentation of $X_{S,\delta}$ as the only homomorphism (up to scalars) from $P'$ to $X_{S,\delta}$ factors through $S$. 
But $P\oplus S$ is a subrepresentation of $X_{S,\delta}$ in a unique way. 
Indeed, as $P$ is a projective subrepresentation of $T$ with $\Hom(P,T)=\C$  and $S\in P^\perp$, the unique embedding of $P$ into $T$ factors through $X_{S,\delta}$. Thus we obtain $\Gr_{\ue}(X_{S,\delta})=\{\mathrm{pt}\}$. This means that we have
 \[\Gr_{\ue}(X_{S,2\delta})=\bigsqcup_{(A,V)\in\mathcal X}\Psi_\ue^{-1}(A,V). \]
Let us investigate the fibres using that \cite[Lemma 3.11]{cc} generalizes to arbitrary exact sequences. This means that $\Psi_\ue^{-1}(A,V)=\A^{[V,X_{S,\delta}/A]}$ if it is not empty.

If $(A,V)=(0,P\oplus S)$, the fibre is empty because $\Hom(S,X_{S,2\delta})=\C$ and the only homomorphism factors through the first copy of $X_{S,\delta}$.

 If $(A,V)=(P\oplus S, 0)$, the fibre is clearly not empty and thus a point.

 If $(A,V)=(P,S)$, applying $\Hom(S,\underline{\quad})$ and $\Hom(\underline{\quad},X_{S,\delta})$ we get isomorphisms $\Ext(S,P)\cong \Ext(S,X_{S,\delta})$ (by construction) and $\Ext(X_{S,\delta},X_{S,\delta})\cong\Ext(S,X_{S,\delta})$. This means there exists a commutative diagram
 \[\xymatrix{0\ar[r] &X_{S,\delta}\ar[r] & X_{S,2\delta}\ar[r] & X_{S,\delta}\ar[r] & 0 \\
  0\ar[r]& P\ar[r]\ar[u] & P'\ar[u]\ar[r]& S\ar[r]\ar[u]& 0}\]
with injective vertical maps. Thus the fibre is not empty.
Now $\Ext(S,P)\cong \Ext(S,X_{S,\delta})$ together with $\Hom(S,P)=0$ implies $\C=\Hom(S,X_{S,\delta})\cong\Hom(S,X_{S,\delta}/P)$. Thus we get $\mathbb A^{[S,X_{S,\delta}/P]}= \mathbb A^1$.

 If $(A,V)=(S,P)$, the fibre is not empty because the inclusion $P\hookrightarrow X_{S,\delta}$ factors through $X_{S,2\delta}$ as $P$ is projective. Thus the fibre is $\mathbb A^{[P,T]}=\mathbb A^1$ because $\Hom(P,T)\cong\Hom(P,X_{S,\delta})$ which follows from $P\in {}^\perp S$. 

This shows that $\Gr_\ue(X_{S,2\delta})$ has a cell decomposition into affine spaces consisting of one point and two affine lines. In particular, we obtain that $P_{\Gr_{\ue}(X_{S,2\delta})}=2q^2+1$. Since the constant term $1$ is the dimension of the zeroth singular homology $H_0(\Gr_{\ue}(X_{S,2\delta});\C)$, which counts the number of connected components, the variety $\Gr_{\ue}(X_{S,2\delta})$ is connected. If it was nonsingular as a variety, then it would satisfy Poincaré duality, which is not the case since the coefficients of the Poincar\'e polynomial are not symmetric.
\end{proof}

\begin{thm}\label{thm: singular quiver Grassmannians for tubes of large rank}
 For every extended Dynkin quiver $Q$ with $|Q_0|\geq 3$, there exists a singular quiver Grassmannian $\Gr_e(X)$ where $X$ is an indecomposable representation lying in an exceptional tube.
\end{thm}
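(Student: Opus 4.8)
The plan is to reduce the general statement to the rank‑two situation already handled in Proposition \ref{notsmooth}. Let $Q$ be an extended Dynkin quiver with $|Q_0|\geq 3$. Since $Q$ is not the Kronecker quiver, at least one of its exceptional tubes has rank $m\geq 2$; in fact the analysis of $\tilde A_{p,q}$, $\tilde D_n$ and $\tilde E_n$ shows that whenever $|Q_0|\geq 3$ there is a tube whose rank is at least $2$. So the first step is to isolate, inside the Auslander--Reiten quiver of $Q$, an exceptional tube $\mathcal T$ of rank $m\geq 2$ and to pass to a rank‑two ``slice'' of it.

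The heart of the argument is that Proposition \ref{notsmooth} is genuinely a statement about a rank‑two tube, so I would show that any exceptional tube of rank $m\geq 2$ contains the data needed to invoke it. Concretely, in a tube of rank $m$ one can always find a quasi‑simple $S$ such that $\tau^{-1}S$ and $S$ generate a rank‑two sub‑configuration: the relevant short exact sequences $\ses{S}{X_{S,\delta}}{T}$ with $T=\tau^{-1}S$ and the Auslander--Reiten structure restrict to exactly the combinatorics used in Lemma \ref{lem1}. The key points to re‑examine are the two dimension counts $\Ext(S,T)=\C$ and the existence of a projective subrepresentation $P$ of $T$ of defect $-1$ with $P\in{}^\perp S$ and $\Ext(S,P)\cong\Ext(X_{S,\delta},P)$. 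The existence of such a $P$ was argued for \emph{every} tame quiver in the proof of Lemma \ref{lem1} (the phrase ``which exists for every tame quiver''), so the only genuinely new input for $m>2$ is verifying that $\Ext(S,T)$ is still one‑dimensional. This holds because $T=\tau^{-1}S$ is the Auslander--Reiten successor of $S$, and the Auslander--Reiten sequence gives $\dim\Ext(S,\tau^{-1}S)=\dim\Hom(\tau S,S)^\ast=1$ by the Auslander--Reiten formula, regardless of the rank of the tube.

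Once $S$, $T$, $P$ and $X_{S,\delta}$ are in place, I would form $X_{S,2\delta}$ via the self‑extension $\ses{X_{S,\delta}}{X_{S,2\delta}}{X_{S,\delta}}$ exactly as in Proposition \ref{notsmooth} and set $\ue=\udim S+\udim P$. The cell‑counting argument there produces the Poincaré polynomial $2q^2+1$ of $\Gr_{\ue}(X_{S,2\delta})$, and the failure of Poincaré duality forces the variety to be singular. Since $X_{S,2\delta}$ is indecomposable and lies in the exceptional tube $\mathcal T$, this is the desired singular quiver Grassmannian. Thus the theorem reduces to the assertion that every $|Q_0|\geq 3$ tame quiver has a tube of rank $\geq 2$ together with the verbatim conclusion of Proposition \ref{notsmooth}.

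The main obstacle I anticipate is confirming that the rank‑two arguments of Lemma \ref{lem1} and Proposition \ref{notsmooth} really are insensitive to the ambient tube having rank $m>2$. In particular, the decomposition $\mathcal X=\{(0,P'),(P',0),(P,S),(S,P),(P\oplus S,0),(0,P\oplus S)\}$ of possible sub/quotient pairs relied on $S$ being the \emph{only} regular quasi‑simple subrepresentation of $X_{S,\delta}$; in a higher‑rank tube one must check that no other quasi‑simple $\tau^{j}S$ sneaks in as a subrepresentation of $X_{S,\delta}$, which is true because $X_{S,\delta}$ has a unique quasi‑simple socle $S$. If this uniqueness and the defect bookkeeping survive, the proof of Proposition \ref{notsmooth} transfers verbatim and the theorem follows.
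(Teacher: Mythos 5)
There is a genuine gap, and it sits exactly at the point you flag as the ``only genuinely new input'': the claim that $\dim\Ext(S,T)=1$ for $T=\tau^{-1}S$ holds ``regardless of the rank of the tube'' is false, and the Auslander--Reiten formula you quote to justify it is garbled. The correct formula gives $\Ext(S,\tau^{-1}S)\cong \Hom(\tau^{-1}S,\tau S)^{\ast}$ (maps factoring through injectives do not occur between regular modules), and in a tube of rank $m\geq 3$ the quasi-simples $\tau^{-1}S$ and $\tau S$ are non-isomorphic, hence Hom-orthogonal, so $\Ext(S,\tau^{-1}S)=0$. The one-dimensional extension group is $\Ext(S,\tau S)\cong\End(\tau S)^{\ast}\cong\C$, i.e.\ the one in the direction of the Auslander--Reiten sequence $\ses{\tau S}{E}{S}$; only in rank two do the two directions coincide, because there $\tau S=\tau^{-1}S$. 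So rank two is not a cosmetic hypothesis of Lemma \ref{lem1}: it is precisely what makes $\Ext(S,T)\neq 0$, which is what produces $P'$ and the two affine lines in Proposition \ref{notsmooth}. Independently of this, the bookkeeping of Proposition \ref{notsmooth} also uses $\udim S+\udim T=\delta$, so that $X_{S,\delta}$ is an extension of the quasi-simple $T$ by $S$ and $\ue=\udim P+\udim S\leq\delta$; in a rank-$m$ tube $X_{S,\delta}$ has quasi-length $m$, its quotient $X_{S,\delta}/S$ is not quasi-simple, the list $\mathcal X$ of pairs $(A,V)$ is different, and nothing guarantees the Poincar\'e polynomial $2q^2+1$. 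Your ``transfers verbatim'' conclusion is therefore unsupported.

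This gap cannot be closed just by choosing the tube more carefully, because for type $\tilde A$ a rank-two tube need not exist: $\tilde A_{p,q}$ has tubes of ranks $p$ and $q$, so for instance $\tilde A_{3,1}$ (four vertices) only has tubes of ranks $3$ and $1$. This is exactly how the paper splits the proof. If $Q$ is not of type $\tilde A_n$, the classification (\cite{Dlab-Ringel76}, \cite[\S 9]{Crawley-Boevey92}) provides a tube of rank exactly two, and then Lemma \ref{lem1} and Proposition \ref{notsmooth} apply as stated --- this part of your proposal agrees with the paper. For $Q=\tilde A_n$ with $|Q_0|\geq 3$ the paper replaces the rank-two argument by an explicit construction: take $P\cong S_q$ the simple projective at a sink $q$ with two incoming arrows $\rho_1,\rho_2$, build quasi-simple string representations $S_1,S_2$ supported on the two maximal paths into the neighbours of $q$, check that $\Ext(S_i,P)=\C$ for both and $P\in{}^\perp S_i$ for at least one $i$, and then re-run the fibre analysis of Proposition \ref{notsmooth} with this data, where $X_{S_1,\delta}$ is the dimension-$\delta$ indecomposable with $(X_{S_1,\delta})_{\rho_1}=0$. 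To repair your proof you would need either this explicit type-$\tilde A$ construction or a genuinely new argument valid in tubes of rank at least three.
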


\begin{proof}
 If $Q$ is not of type $\tilde A_n$, it has an exceptional tube of rank two, see \cite{Dlab-Ringel76} or \cite[\S 9]{Crawley-Boevey92}. Thus we can combine Lemma \ref{lem1} and Proposition \ref{notsmooth}. 

 If $Q=\tilde A_n$ (non-cyclic) with $|Q_0|\geq 3$, there exists a subquiver $q_1\xrightarrow{\rho_1}q\xleftarrow{\rho_2}q_2$ and a projective simple representation $P\cong S_q$. Then there are two unique maximal paths starting in $q_1$ (resp. $q_2$) which go into the opposite direction to $\rho_1$ (resp. $\rho_2$). With these paths we can associate quasi-simple representations $S_1$ and $S_2$ with one-dimensional vector spaces along the support of these paths such that $\Ext(S_1,P)=\Ext(S_2,P)=\C$. It is straightforward to check that we have $P\in {}^\perp S_i$ for at least one of the two quasi-simples as the path corresponding to $S_1$ cannot end at $q_2$ if the one corresponding to $S_2$ ends in $q_1$. We can assume without loss of generality that $S_1$ satisfies the claim. We consider the representation $X_{S_1,\delta}$ having $S_1$ as a subrepresentation which means that $\dim (X_{S_1,\delta})_q=1$ for every $q\in Q_0$, $(X_{S_1,\delta})_{\rho_1}=0$ and $(X_{S_1,\delta})_{\rho}=1$ for every $\rho\neq\rho_1$.

 It is straightforward to check that the same arguments as above yield 
 \[
  P_{\Gr_{\udim P+\udim S_1}(X_{S_1,2\delta})}=2q^2+1.
 \]
\end{proof}

\begin{ex}
 If $Q=\tilde D_4$ is in subspace orientation, we can consider the following dimension vectors (and the unique indecomposables induced by them) to obtain a singular quiver Grassmannian: $\udim S=(1,1,1,0,0),\,\udim T=(1,0,0,1,1),\,\udim P=(1,0,0,1,0),\,\udim P'=(2,1,1,1,0)$.
\end{ex}

\begin{rem}
 The result $P_{\Gr_{\udim P+\udim S_1}(X_{2\delta})}=2q^2+1$ for extended Dynkin quivers with at least $3$ vertices suggests, that $\Gr_{\udim P+\udim S_1}(X_{2\delta})$ is the one point union of two rational curves. The authors have verified for extended Dynkin quivers of types $A$ and $D$ that it is indeed the one point union of two projective lines.
\end{rem}

\subsection{The Kronecker quiver}\label{kronecker}
In order to find a quiver Grassmannian with singularities for the Kronecker quiver $K(2)$, one has to consider higher dimensional representations than it is the case for other tame quivers. The smallest dimensional representation with singular quiver Grassmannian has dimension vector $3\delta=(3,3)$.
\begin{thm} \label{thm: singular quiver Grassmannians for the Kronecker quiver}
 There are quiver Grassmannians with singularities for the Kronecker quiver.
\end{thm}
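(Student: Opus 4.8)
The plan is to exhibit a single explicit representation of dimension vector $3\delta=(3,3)$ together with a subdimension vector for which the quiver Grassmannian can be written down in coordinates and shown to be singular as a variety. Write $K(2)$ with vertices $0,1$ and arrows $a,b\colon 0\to 1$. Since $p=q=1$ for $\tilde A_1$, every tube of $K(2)$ is homogeneous, and I would take $X$ to be the indecomposable regular representation of dimension $3\delta$ given by $X_a=I_3$ and $X_b=N$, where $N$ is the nilpotent Jordan block with $Ne_i=e_{i-1}$ (and $Ne_1=0$) on $X_0=X_1=\C^3$. For the subdimension vector I would choose $\ue=(1,2)$; a direct inspection shows that the other small choices are harmless, since $\Gr_{(1,1)}(X)$ and $\Gr_{(2,2)}(X)$ are single points while $\Gr_{(1,3)}(X)$ and $\Gr_{(2,3)}(X)$ are projective planes. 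Thus $\ue=(1,2)$ is the first interesting case, consistent with the assertion that $(3,3)$ is the smallest singular example.

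Next I would describe $\Gr_{(1,2)}(X)$ as an explicit subvariety of $\P^2\times(\P^2)^\vee$. Writing a point as a line $U_0=[v]$ with $v=(x,y,z)$ and a plane $U_1=\ker(w)$ with $w=(p,q,r)$, the two incidence conditions $X_aU_0\subseteq U_1$ and $X_bU_0\subseteq U_1$ become $v\in U_1$ and $Nv\in U_1$, that is
\[
px+qy+rz=0,\qquad py+qz=0.
\]
Hence $\Gr_{(1,2)}(X)$ is the complete intersection of two bilinear $(1,1)$-hypersurfaces, so it is a surface. The fibres of the projection to $\P^2=\{[v]\}$ are single points away from $[e_1]$ and a full pencil over $[e_1]$, which localises the expected singularity at the point $U_0=[e_1]$, $U_1=\langle e_1,e_2\rangle$.

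Then I would pass to the affine chart $x=1$, $r=1$ with coordinates $(y,z)$ and $(p,q)$, placing this point at the origin. There the equations read $p+qy+z=0$ and $py+qz=0$; eliminating $p$ from the first identifies the Grassmannian near the origin with the hypersurface
\[
H:=qz-qy^2-zy=0
\]
in $\A^3_{y,z,q}$. A one-line computation gives $\nabla H(0)=0$, so the origin is a singular point; equivalently, the quadratic part $z(q-y)$ of $H$ is a product of two distinct linear forms, so the tangent cone is a union of two planes. Since $H$ is linear in $z$ with leading coefficient $q-y$, it is irreducible, and so $\Gr_{(1,2)}(X)$ is an irreducible surface carrying a genuine singular point, which proves the theorem.

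The main point to get right is not any individual calculation but the modelling step: choosing $\ue=(1,2)$ and the candidate point so that the two incidence equations become non-transverse precisely there. The remaining delicacy is to confirm that the singularity is a property of the reduced variety rather than an artifact of the scheme structure, and this is exactly what the vanishing of $\nabla H$ together with the reducible tangent cone establishes — thereby distinguishing the present example from the Kronecker phenomena of Remark 2, which are nonreduced yet regular as varieties.
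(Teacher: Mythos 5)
Your proposal is correct and follows essentially the same route as the paper: the same indecomposable representation of dimension $(3,3)$ (identity plus a nilpotent Jordan block, so the same singular point $U_0=\ker N$, $U_1=\langle e_1,e_2\rangle$ up to relabelling), the same subdimension vector $\ue=(1,2)$, and an explicit elimination down to an irreducible cubic hypersurface in $\A^3$ whose Jacobian vanishes at the origin, with irreducibility (hence reducedness) ensuring the singularity is one of the variety and not an artifact of the scheme structure. The only cosmetic difference is that the paper obtains its local equations ($xy+xz+yz^2=0$) from the dense Schubert cell and the equations of \cite{L13}, whereas you write the incidence conditions directly in $\P^2\times(\P^2)^\vee$ and eliminate in an affine chart.
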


\begin{proof}
 Let $X$ be the representation of $Q$ given by the following coefficient quiver $\Gamma$:
 \[
 \beginpgfgraphicnamed{tikz/fig10}
 \begin{tikzpicture}[>=latex]
  \matrix (m) [matrix of math nodes, row sep=1.3em, column sep=8em, text height=1ex, text depth=0ex]
%column   1       2       3       4       5        6       7   
   {      2   &   1   \\   % row 1
          4   &   3   \\   % row 2
          6   &   5   \\   % row 3
};
   \path[->,font=\scriptsize]
   (m-1-1) edge node[auto] {$a$} (m-1-2)
   (m-1-1) edge node[auto] {$b$} (m-2-2)
   (m-2-1) edge node[auto] {$a$} (m-2-2)
   (m-2-1) edge node[auto] {$b$} (m-3-2)
   (m-3-1) edge node[auto] {$a$} (m-3-2)
   ;
  \end{tikzpicture}
 \endpgfgraphicnamed
 \]
 Consider the dimension vector $\ue=(1,2)$ and the type $\ue$-subset $\beta=\{3,5,6\}$ of $\Gamma_0$. Then $C_\beta^X$ is the open dense Schubert cell of $\Gr_\ue(X)$ and every singularity of $C_\beta^X$ will be a singularity of $\Gr_\ue(X)$. As explained in section 2.3 in \cite{L13}, $C_\beta^X$ is defined by the following equations:
 \begin{align*}
  E(a,1,6): && w_{2,6} - w_{1,5} + w_{1,3} w_{4,6} \ &= \ 0 \\ 
  E(b,1,6): && w_{1,3} w_{2,6} + w_{1,5} w_{4,6}   \ &= \ 0
 \end{align*}
 Writing $x=w_{2,6}$, $y=w_{1,3}$ and $z=w_{4,6}$, we can eliminate the first equation by substituting $w_{1,5}=x+yz$ in the second equation. This identifies $C_\beta$ with the hypersurface in $\A^3$ that is defined by 
  \[
  xy+xz+yz^2 \ = \ 0.
 \]
 Its Jacobian 
 \[
  J(x,y,z) \ = \ (y+z,\ x+z^2,\ x+2yz)
 \]
 vanishes precisely in the origin $(0,0,0)$, which is a point of  the hypersurface $C_\beta^X$. Since a hypersurface in an affine space which is defined by a single equation does not have embedded components (cf.\ Exercise 5.5.I in \cite{Vakil}), $(0,0,0)$ is a singularity of $C_\beta^X$ as a variety. 
\end{proof}

\begin{cor}\label{cor: singular quiver Grassmannians in homogeneous tubes}
 Each homogeneous tube of the Kronecker quiver contains a representation with singular quiver Grassmannian.
\end{cor}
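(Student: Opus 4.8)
The plan is to transport the single singular example produced in Theorem \ref{thm: singular quiver Grassmannians for the Kronecker quiver} to every homogeneous tube, using the symmetry of $K(2)$ under linear changes of the arrow space.

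First I would identify the representation $X$ appearing in that proof. Reading off its coefficient quiver, the arrows act on $X_0=\langle e_2,e_4,e_6\rangle$ and $X_1=\langle e_1,e_3,e_5\rangle$ by $a(e_2)=e_1,\,a(e_4)=e_3,\,a(e_6)=e_5$ and $b(e_2)=e_3,\,b(e_4)=e_5,\,b(e_6)=0$. Hence $a$ is invertible and $a^{-1}b$ is a single nilpotent Jordan block of size $3$. Since the commutant of one Jordan block is a local ring, $X$ is indecomposable, and as $\delta(X)=\Sc{\delta}{3\delta}=0$ it is regular. Therefore $X$ is exactly the indecomposable representation of dimension $3\delta$ in one particular homogeneous tube, the one indexed by the point $\lambda_0\in\P^1$ at which $a^{-1}b$ becomes nilpotent.

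Next I would exploit the $\mathrm{GL}_2(\C)$-action on the two-dimensional arrow space $W=\langle a,b\rangle$. For $g=\left(\begin{smallmatrix}\alpha&\beta\\\gamma&\delta\end{smallmatrix}\right)\in\mathrm{GL}_2(\C)$, replacing $(a,b)$ by $(\alpha a+\beta b,\ \gamma a+\delta b)$ defines a self-equivalence $G_g$ of $\Rep(K(2))$ that fixes the underlying vector spaces together with the pencil $W$, and that permutes the homogeneous tubes according to the Möbius action of $\mathrm{PGL}_2(\C)$ on $\P^1$, which is transitive. The key point is that a pair $(U_0,U_1)$ of subspaces is a subrepresentation of $X$ if and only if $f(U_0)\subseteq U_1$ for every $f\in W$, a condition that depends only on $W$ and not on the chosen basis $a,b$. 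Consequently $G_g$ acts as the identity on underlying sets and yields the literal equality of subvarieties $\Gr_\ue(X)=\Gr_\ue(G_g(X))$ inside $\Gr(\ue_0,X_0)\times\Gr(\ue_1,X_1)$.

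Finally I would combine the two observations. Given an arbitrary homogeneous tube indexed by $\mu\in\P^1$, I choose $g\in\mathrm{GL}_2(\C)$ with $g\cdot\lambda_0=\mu$; then $G_g(X)$ is the indecomposable regular representation of dimension $3\delta$ in the tube $\mu$, and $\Gr_{(1,2)}(G_g(X))=\Gr_{(1,2)}(X)$, which is singular by Theorem \ref{thm: singular quiver Grassmannians for the Kronecker quiver}. As $\mu$ is arbitrary, every homogeneous tube contains a representation with singular quiver Grassmannian. The only step requiring genuine care is verifying that $X$ is truly regular indecomposable of dimension $3\delta$ rather than decomposable, and this is settled immediately by the single-Jordan-block description; the transitivity of the tube-permuting action and the basis-independence of the subrepresentation condition are then routine.
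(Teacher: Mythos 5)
Your proof is correct, and it takes a route that differs from the paper's in one essential respect: the transport mechanism between tubes. The paper's own proof is very short --- it simply invokes the proof of Lemma 1.4 of \cite{Cerulli11}, which shows that for fixed $\ue$ and $n$ the quiver Grassmannians $\Gr_\ue(X_{n\delta})$ have the same isomorphism type regardless of which tube $X_{n\delta}$ lies in, and combines this with Theorem \ref{thm: singular quiver Grassmannians for the Kronecker quiver} plus the observation that every tube contains a representation of dimension $3\delta$. You instead prove the transport step yourself: you identify the representation of Theorem \ref{thm: singular quiver Grassmannians for the Kronecker quiver} as the regular indecomposable of dimension $3\delta$ in the tube where $a^{-1}b$ is nilpotent (the Jordan-block computation of $\End(X)$ is the right way to see indecomposability), and then use the $\mathrm{GL}_2(\C)$-action on the arrow span $W=\langle a,b\rangle$, which permutes the rank-one tubes transitively via $\mathrm{PGL}_2(\C)$ acting on $\P^1$, while leaving the set of subrepresentations --- hence the quiver Grassmannian as a subvariety of $\Gr(\ue_0,X_0)\times\Gr(\ue_1,X_1)$ --- literally unchanged, since the subrepresentation condition depends only on the pencil $W$ and not on the chosen basis of it. What this buys is a self-contained argument with no external citation, and in fact a slightly stronger conclusion (equality of varieties rather than abstract isomorphism); what the paper's version buys is brevity, at the cost of outsourcing exactly the point you prove. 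Your mechanism is in substance the same symmetry that underlies the cited lemma of Cerulli Irelli (and the paper's Remark \ref{rem: families of quiver Grassmannians are not always isomorphism type preserving} alludes to this), so the two proofs agree in spirit while yours makes the key step explicit.
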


\begin{proof}
 Let $Q$ be the Kronecker quiver. Let $\delta=(1,1)$ be smallest imaginary root of $Q$. By the proof of Lemma 1.4 in \cite{Cerulli11}, the quiver Grassmannians $\Gr_\ue(X_{n\delta})$ have the same isomorphism type for fixed $\ue$ and $n$, independent from the tube that $X_{n\delta}$ lives in. Combining this with Theorem \ref{thm: singular quiver Grassmannians for the Kronecker quiver}, we see that the quiver Grassmannian $\Gr_\ue(X)$ is singular for $\ue=(1,2)$ and for every indecomposable representation $X$ of dimension ${3\delta}$. Note that every homogeneous tube contains a representation of this dimension.
\end{proof}

\begin{rem}\label{rem: families of quiver Grassmannians are not always isomorphism type preserving}
 The fact that the quiver Grassmannians for exceptional and homogeneous tubes of the Kronecker quiver are isomorphic is accidental and caused by the fact that all tubes of the Kronecker quiver have rank $1$. Indeed the argument of Lemma 1.4 in \cite{Cerulli11} shows more generally that a family of quiver Grassmannians does not deform from the homogeneous tubes to exceptional tubes of rank $1$. 
 
 This is, however, not true anymore for exceptional tubes of rank $2$. The following is an example of a family of smooth quiver Grassmannians in the homogeneous tubes that deforms to a singular quiver Grassmannian in an exceptional tube of rank $2$. Even worse, both the Poincar\'e polynomial and the Euler characteristic are not preserved by this degeneration.
\end{rem}

\begin{ex}[A family of quiver Grassmannians of type $\widetilde A_2$]\label{ex: family of quiver Grassmannians}
 Let $Q$ be a quiver of extended Dynkin quiver type $\widetilde A_2$ of the form
 \[
  \beginpgfgraphicnamed{tikz/fig6}
  \begin{tikzpicture}[>=latex]
   \matrix (m) [matrix of math nodes, row sep=2.5em, column sep=4em, text height=1ex, text depth=0ex]
%column   1       2       3       4       5        6       7   
    {      \bullet & \bullet & \bullet   \\   % row 1
    };
   \path[->,font=\scriptsize]
    (m-1-1) edge node[auto,swap] {$a$} (m-1-2)
    (m-1-3) edge node[auto] {$b$} (m-1-2)
    (m-1-3) edge[bend right=10] node[auto,swap] {$c$} (m-1-1)
    ;
   \end{tikzpicture}
  \endpgfgraphicnamed
 \]
 Let $\lambda$ be a complex parameter and $X_\lambda$ be the representation with coefficient quiver $\Gamma_\lambda$
 \[
  \beginpgfgraphicnamed{tikz/fig9}
  \begin{tikzpicture}[>=latex]
   \matrix (m) [matrix of math nodes, row sep=3em, column sep=4em, text height=1ex, text depth=0ex]
%column   1       2       3       4       5        6       7   
    {     1 & 2 & 3   \\   % row 1
          4 & 5 & 6   \\   % row 2
    };
   \path[->,font=\scriptsize]
    (m-1-1) edge node[auto] {$a$} (m-1-2)
    (m-1-3) edge node[auto,swap] {$b$} (m-1-2)
    (m-1-3) edge[bend left=10] node[below,pos=0.6] {$c,\lambda$} (m-1-1)
    (m-1-3) edge node[right=0.3,pos=0.4] {$c$} (m-2-1)
    (m-2-3) edge[bend right=14] node[above=-0.1,pos=0.4] {$c, \lambda$} (m-2-1)
    (m-2-1) edge node[auto,swap] {$a$} (m-2-2)
    (m-2-3) edge node[auto] {$b$} (m-2-2)
    ;
   \end{tikzpicture}
  \endpgfgraphicnamed
 \]
 Then $X_\lambda$ varies through all homogeneous tubes for $\lambda\in\C^\times$and $X_0$ is in an exceptional tube. We consider the quiver Grassmannians $\Gr_\ue(X_\lambda)$ for dimension vector $\ue=(1,2,1)$. Let $\beta=\{2,4,5,6\}$. Then the Schubert cell $C_\beta^{X_\lambda}$ is open dense in $\Gr_\ue(X_\lambda)$ and we can apply the description of the quiver Grassmannian in terms of homogeneous coordinates from \cite{LW16}. 
 
 Note that we can simplify the equations of \cite{LW16} if we make use of the fact that the embedding $\Gr_\ue(X_\lambda)\to\Gr(4,6)$ factors through the product Grassmannian $\Gr(1,2)\times\Gr(2,2)\times\Gr(1,2)$, which is isomorphic to $\P^1\times\P^1$ with bihomogeneous coordinates $[\,\Delta_1:\Delta_4 \,|\, \Delta_3:\Delta_6\,]$. Then the defining bihomogeneous equation of $\Gr_\ue(X_\lambda)$ inside $\P^1\times\P^1$ is
 \[
  F_\beta(c,6,1) \ = \ \lambda\, \Delta_4\, \Delta_3 \, - \,\lambda\, \Delta_1\, \Delta_6 \, + \, \Delta_1 \, \Delta_3 \ = \ 0.
 \]
 From this, we see that $\Gr_\ue(X_\lambda)$ forms a flat family over $\C$ with respect to the parameter $\lambda$. Its fibres over $\lambda\neq 0$ are smooth quadrics, which are isomorphic to $\P^1$. The fibre over $\lambda=0$ is the transversal intersection of two projective lines in a point, which is a singularity of $\Gr_\ue(X_0)$.
 
 The Poincar\'e polynomial and the Euler characteristics of the fibres $\Gr_\ue(X_\lambda)$ in this family are:
 \begin{center}
  \begin{tabular}{c|c|c}
                   & Poincar\'e polynomial & Euler characteristic \\
       \hline 
                   &                       &        \\[-10pt]         
   $\lambda\neq 0$ & $q^2+1$               &   $2$  \\
   $\lambda=0$     & $2q^2+1$              &   $3$  
  \end{tabular}
 \end{center}
 Note that $\Gr_\ue(X_0)$ is the same quiver Grassmannian as was considered in the proof of Theorem \ref{thm: singular quiver Grassmannians for tubes of large rank}, which reproves the result for type $\widetilde A_2$.
\end{ex}

\subsection{Homogeneous tubes}
Apart from non-reduced points, it is also relatively easy to describe quiver Grassmannians coming along with representations in a homogeneous tube which are singular as a scheme. Let us consider the example from section \ref{kronecker}, i.e. the Kronecker quiver $K(2)=\xymatrix@C=1pc{0\ar[r]<0.6ex>\ar[r]<-0.1ex> & 1}$ and any representation $X_{3\delta}$ of dimension $3\delta$ which lies in a homogeneous tube. For $\ue=(1,2)$, there is a generic subrepresentation $U$ of dimension $(1,2)$ that is indecomposable projective. In particular, we have $\dim\Ext(U,X/U)=0$ because $U$ is projective. But there are also subrepresentations of dimension $\ue$ which are isomorphic to the direct sum $X_\delta\oplus S_1$ where $X_\delta$ is the indecomposable representation of dimension $\delta$ lying in the same tube and where $S_1$ is the simple projective representation supported at the sink $1$ of $K(2)$. As $X_\delta\oplus S_1$ is also a subrepresentation of $X_{2\delta}$ with the simple injective quotient $S_0$, there exists a commutative diagram
\[\xymatrix{0\ar[r]&S_0\ar[r]& S_0\oplus X_\delta\ar[r] &X_\delta\ar[r]& 0\\0\ar[r]&X_{2\delta}\ar@{^-->>}[u]\ar[r]&X_{3\delta}\ar@{^-->>}[u]\ar[r]&X_\delta\ar@{=}[u]\ar[r]& 0\\0\ar[r]&X_\delta\oplus S_1\ar@{^(->}[u]\ar@{=}[r]&X_\delta\oplus S_1\ar@{^(->}[u]\ar[r]&0\ar[u]\ar[r]& 0}\] 
Since we have $\dim\Ext(X_\delta\oplus S_1,S_0\oplus X_\delta)=1$, this shows that the quiver Grassmannian $\Gr_{(1,2)}(X_{3\delta})$ is singular as a scheme, see \cite[Proposition 6]{cr}. Since it is not clear that the scheme is reduced, this observation does not imply that $\Gr_{(1,2)}(X_{3\delta})$ is singular as a variety. But as we observed in section \ref{kronecker}, it is also smooth as a variety.

In order to construct a singular quiver Grassmannian for homogeneous tubes, we make use of the following lemma.
\begin{lemma}\label{schofield}  Let $X$ and $Y$ be two exceptional representations of a quiver $Q$ such that $X\in {}^\perp Y$, $\dim\Ext(Y,X)=m$ and $\mathrm{supp} (X)\cap\supp (Y)=\emptyset$. Let $\ue=a\cdot\udim X+b\cdot\udim Y$. Then there is a fully faithful functor $F:\Rep(K(m))\to\Rep(Q)$ inducing isomorphisms $\Gr_\ue(FZ)\cong \Gr_{(b,a)}(Z)$ for every representation $Z\in\Rep(K(m))$.
\end{lemma}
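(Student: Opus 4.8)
The plan is to realize the extension‑closed subcategory generated by the exceptional pair $(Y,X)$ as a copy of $\Rep K(m)$, using the disjoint supports to make the construction completely explicit, and then to match subrepresentations on both sides by a dimension count. Throughout I would write $K(m)$ for the quiver with vertices $1,2$ and all $m$ arrows oriented $1\to2$, with simple representations $S_1,S_2$; the functor will satisfy $F(S_1)=Y$ and $F(S_2)=X$, so that a representation $Z=(Z_1,Z_2,\phi_1,\dots,\phi_m)$ with $\udim Z=(\dim Z_1,\dim Z_2)$ has $\udim(FZ)=(\dim Z_2)\udim X+(\dim Z_1)\udim Y$.

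\emph{Construction of $F$.} Because $\supp X\cap\supp Y=\emptyset$, at every vertex $q$ at most one of $X_q,Y_q$ is nonzero. Hence the standard two–term complex computing morphisms between $Y$ and $X$ collapses: it yields $\Hom(Y,X)=\Hom(X,Y)=0$ together with a canonical identification $\Ext(Y,X)\cong\bigoplus_{v}\Hom(Y_{s(v)},X_{t(v)})$, the sum ranging over the bridging arrows $v$ with $s(v)\in\supp Y$ and $t(v)\in\supp X$. I would fix a basis $\xi_1,\dots,\xi_m$ of $\Ext(Y,X)$ and write $\xi_{i,v}\in\Hom(Y_{s(v)},X_{t(v)})$ for its components. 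For $Z$ set $(FZ)_q=X_q\otimes Z_2$ on $\supp X$, $(FZ)_q=Y_q\otimes Z_1$ on $\supp Y$, and $(FZ)_q=0$ elsewhere; let arrows internal to $\supp X$ (resp.\ $\supp Y$) act by $X_v\otimes\mathrm{id}_{Z_2}$ (resp.\ $Y_v\otimes\mathrm{id}_{Z_1}$), and let a bridging arrow $v$ act by $\sum_{i=1}^m\xi_{i,v}\otimes\phi_i$. On morphisms $F$ acts by tensoring a pair $(g_1,g_2)$ with the data of $X,Y$; this is functorial and produces a natural short exact sequence $0\to X\otimes Z_2\to FZ\to Y\otimes Z_1\to0$ with class $\sum_i\xi_i\otimes\phi_i$.

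\emph{Exactness and full faithfulness.} Vertexwise the sequence above splits, so $F$ is exact. For full faithfulness, any morphism $FZ\to FZ'$ respects supports and is therefore block triangular with respect to $X\otimes Z_2\hookrightarrow FZ\twoheadrightarrow Y\otimes Z_1$; using $\Hom(X,X)=\Hom(Y,Y)=\C$ and $\Hom(X,Y)=\Hom(Y,X)=0$, its diagonal blocks have the shape $\mathrm{id}_X\otimes g_2$ and $\mathrm{id}_Y\otimes g_1$, and compatibility with the bridging arrows is exactly the Kronecker intertwining relation $\phi_i'g_1=g_2\phi_i$. This identifies $\Hom_{K(m)}(Z,Z')$ with $\Hom_Q(FZ,FZ')$.

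\emph{The Grassmannian isomorphism.} By exactness $W\mapsto FW$ carries a subrepresentation $W\subseteq Z$ of dimension $(b,a)$ to a subrepresentation $FW\subseteq FZ$ of dimension $\ue=a\udim X+b\udim Y$, and in coordinates this is a morphism $\Gr_{(b,a)}(Z)\to\Gr_\ue(FZ)$. To build the inverse I would take $U\subseteq FZ$ with $\udim U=\ue$ and restrict to the two supports. On $\supp X$ one gets $U|_{\supp X}\subseteq X\otimes Z_2\cong X^{\dim Z_2}$ of pure dimension $a\udim X$; the key point is that such a subrepresentation is automatically of the form $X\otimes W_2$ for a unique $a$‑dimensional $W_2\subseteq Z_2$, so that $\Gr_{a\udim X}(X^{\dim Z_2})\cong\Gr(a,Z_2)$. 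I would prove this by induction on $\dim Z_2$: since $X$ is a brick, a nonzero element of $\Hom(X,U|_{\supp X})$ gives an inclusion of a diagonal copy of $X$, and after splitting it off using $\Ext(X,X)=0$ the induction applies to the quotient, forcing $U|_{\supp X}\in\mathrm{add}(X)$ and hence $U|_{\supp X}=X\otimes W_2$. Symmetrically $U|_{\supp Y}=Y\otimes W_1$ for a unique $b$‑dimensional $W_1\subseteq Z_1$. Finally, closure of $U$ under the bridging maps $\sum_i\xi_{i,v}\otimes\phi_i$ says that $\sum_i\xi_{i,v}(y)\otimes\overline{\phi_i(w)}$ vanishes in $X_{t(v)}\otimes(Z_2/W_2)$ for all bridging $v$, all $y$ and all $w\in W_1$; since the $\xi_i$ are linearly independent in $\Ext(Y,X)$, this separates into the $m$ conditions $\phi_i(W_1)\subseteq W_2$, so $W=(W_1,W_2)$ is a subrepresentation of $Z$ with $FW=U$. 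The assignments $W\mapsto FW$ and $U\mapsto W$ are mutually inverse morphisms of varieties, giving $\Gr_\ue(FZ)\cong\Gr_{(b,a)}(Z)$.

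\emph{Main obstacle.} The crux is the inverse direction of the last step, and within it the two facts that (i) a subrepresentation of the power $X^{\dim Z_2}$ of the prescribed pure dimension $a\udim X$ must be induced from a subspace of the multiplicity space, i.e.\ $\Gr_{a\udim X}(X^{r})\cong\Gr(a,r)$, and (ii) closure under the bridging arrows decouples into the individual conditions $\phi_i(W_1)\subseteq W_2$. Both rest essentially on the hypothesis $\supp X\cap\supp Y=\emptyset$: it is what makes the restrictions of $FZ$ to the two supports honest powers of the bricks $X$ and $Y$ (so that rigidity can be applied) and what pins $\Ext(Y,X)$ down as maps along bridging arrows in a basis from which the $\phi_i$ can be read off independently.
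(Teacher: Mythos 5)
Your proof is correct, and it follows the same skeleton as the paper's: realize $FZ$ as an extension $0\to X^s\to FZ\to Y^r\to 0$, use disjointness of supports to force any subrepresentation of dimension $\ue$ to have dimension $a\cdot\udim X$ on $\supp(X)$ and $b\cdot\udim Y$ on $\supp(Y)$, and use exceptionality to identify the two pieces as $X\otimes W_2$ and $Y\otimes W_1$. The difference is where the two key ingredients come from: the paper gets $F$ from Schofield induction \cite{sc2} and the subrepresentation count from the Caldero--Chapoton map $\Psi_\ue$, whose nonempty fibres are affine spaces, here $\A^{[Y^b,X^{s-a}]}=\A^0$; you instead construct $F$ explicitly by tensoring along bridging arrows (possible precisely because disjoint supports collapse the two-term complex computing $\Ext(Y,X)$) and exhibit the inverse of $W\mapsto FW$ directly. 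Your route is more self-contained and actually fills in a step the paper leaves implicit: the decoupling argument (linear independence of the $\xi_i$ forces $\phi_i(W_1)\subseteq W_2$) shows that every $U\subseteq FZ$ of dimension $\ue$ really is $FW$ for a subrepresentation $W\subseteq Z$, whereas the paper proves that nonempty fibres of $\Psi_\ue$ are points but does not spell out why the nonempty ones are exactly those indexed by subrepresentations of $Z$. Two small additions would make your write-up airtight: (a) note that $X\in{}^\perp Y$ together with disjoint supports excludes arrows from $\supp(X)$ to $\supp(Y)$ meeting nonzero spaces, so your case list of arrows in the construction of $FZ$ is exhaustive; (b) in step (i), the nonzero element of $\Hom(X,U|_{\supp X})$ exists because $\dim\Hom(X,U|_{\supp X})\geq\Sc{\udim X}{a\,\udim X}=a>0$ by the Euler form, and the splitting comes from the brick property (a nonzero composite $X\to U|_{\supp X}\hookrightarrow X^s\to X$ is a nonzero scalar, hence gives a retraction) rather than from $\Ext(X,X)=0$ alone.
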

\begin{proof}The existence of the functor $F$ is ensured by Schofield induction \cite{sc2}.
%Since $X$ is exceptional, we have $\Gr_{a\cdot\dim X}(X^r)\cong \Gr_a(\C^r)$, see...
A fixed representation $Z\in\Rep(K(m))$ of dimension $(r,s)$ gives rise to a short exact sequence
\[\ses{X^s}{FZ}{Y^r}\]
and induces a map $\Psi_\ue:\Gr_\ue(FZ)\to\coprod_{\underline f+\underline g=\ue}\Gr_{\underline f}(X^s)\times\Gr_{\underline g}(Y^r)$. Let $\ue=a\cdot\udim X+b\cdot\udim Y$. Since $F$ is fully faithful, every subrepresentation $U$ of $Z$ of dimension $(b,a)$ corresponds to a subrepresentation $FU$ of $FZ$ and we get an embedding $\Gr_{(b,a)}(Z)\hookrightarrow\Gr_{\ue}(FZ)$. Indeed, every subrepresentation $U$ of $X$ of dimension $(b,a)$ gives rise to a commutative diagram
\[\xymatrix{0\ar[r]&X^s\ar[r]& FZ\ar[r] &Y^r\ar[r]& 0\\0\ar[r]&X^a\ar@{^(->}[u]\ar[r]&FU\ar@{^(->}[u]\ar[r]&Y^b\ar@{^(->}[u]\ar[r]& 0}\] 
Since we have $\supp (X)\cap\supp (Y)=\emptyset$, the equality $\underline f+\underline g=\ue$ is only satisfied if $\underline f=a\cdot\udim X$ and $\underline g=b\cdot\udim Y$. Since every subrepresentation of dimension $a\cdot\udim X$ of $X^s$ is isomorphic to $X^a$ as $X$ is exceptional and since the analogous statement is true for subrepresentations of dimension $b\cdot\udim Y$ of $Y^r$, it follows that every subrepresentation is of this shape. Finally, we have 
\[\Psi_\ue^{-1}(X^a,Y^b)=\mathbb A^{[Y^b,X^{s-a}]}=\mathbb A^0\] which yields the claim.
\end{proof}
We make use of this lemma to prove the following
\begin{thm} \label{thm: singular quiver Grassmannians in homogeneous tubes}
 Let $Q$ be a quiver of extended Dynkin type . Then there exists a quiver Grassmannian $\Gr_{\ue}(X)$ which is singular as a variety and where $X$ is an indecomposable representation lying in a homogeneous tube.
\end{thm}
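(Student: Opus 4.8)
The plan is to reduce to the Kronecker quiver by means of Schofield induction (Lemma \ref{schofield}) and to transport the singular quiver Grassmannian of a homogeneous-tube representation of $K(2)$ supplied by Corollary \ref{cor: singular quiver Grassmannians in homogeneous tubes}. If $Q=K(2)$ is the Kronecker quiver itself there is nothing to do, since that corollary already exhibits a singular $\Gr_{(1,2)}(X)$ for every indecomposable $X$ of dimension $3\delta$ in every homogeneous tube. So I assume that $Q$ has at least three vertices and write $\delta$ for its imaginary Schur root.

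The first step is to produce two exceptional representations $X,Y$ of $Q$ with $\supp(X)\cap\supp(Y)=\emptyset$, with $\udim X+\udim Y=\delta$, with $X\in{}^\perp Y$ and with $\dim\Ext(Y,X)=2$. For types $\tilde D$ and $\tilde E$ I would take an extending vertex $i$ (so $\delta_i=1$), which is a leaf of the diagram, set $Y=S_i$ the simple at $i$, and let $X$ be the indecomposable of dimension $\delta-e_i$. Since $\delta$ is a radical vector, $\Sc{\delta}{e_i}+\Sc{e_i}{\delta}=0$ and $\Sc{\delta}{\delta}=0$, whence $\Sc{\delta-e_i}{\delta-e_i}=\Sc{e_i}{e_i}=1$, so $\delta-e_i$ is a real root; for the given orientation its indecomposable $X$ is a brick without self-extensions, hence exceptional. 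Disjoint support is clear because $X_i=0$, and it forces $\Hom(X,Y)=\Hom(Y,X)=0$. From $\Sc{\udim X}{\udim X}=\Sc{\udim Y}{\udim Y}=1$ and $0=\Sc{\delta}{\delta}=2+\Sc{\udim X}{\udim Y}+\Sc{\udim Y}{\udim X}$ I get $\dim\Ext(X,Y)+\dim\Ext(Y,X)=2$. That this splits as $0+2$ rather than $1+1$ is read off from the leaf $i$: it is a source or a sink, and evaluating $\Sc{e_i}{\udim X}=-\sum_{s(v)=i}(\udim X)_{t(v)}$ gives $\Ext(S_i,X)=0$ when $i$ is a sink, while when $i$ is a source its unique neighbour $j$ has $\delta_j=2$, so $\dim\Ext(S_i,X)=2$ and hence $\Ext(X,S_i)=0$. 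In either case one module lies in the left perpendicular category of the other; after exchanging the roles of $X$ and $Y$ if necessary---harmless since $K(2)\cong K(2)^{\mathrm{op}}$---I may assume $X\in{}^\perp Y$ and $\dim\Ext(Y,X)=2$. For type $\tilde A$ I would instead split the oriented cycle into two complementary arcs and take $X,Y$ to be the thin modules supported on them, running the same Euler-form analysis at the two cut points.

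The second step applies Lemma \ref{schofield} with $m=\dim\Ext(Y,X)=2$, yielding a fully faithful exact functor $F\colon\Rep(K(2))\to\Rep(Q)$ with $\Gr_\ue(FZ)\cong\Gr_{(b,a)}(Z)$ for $\ue=a\cdot\udim X+b\cdot\udim Y$. I would then run the $\P^1$-family of homogeneous regular simples $R_\lambda$ of $K(2)$, each of dimension $(1,1)$, together with their length-three uniserial modules $Z_\lambda=R_\lambda^{(3)}$ of dimension $3\delta_{K(2)}=(3,3)$. Exactness of $F$ shows that $FZ_\lambda$ is filtered by copies of $FR_\lambda$, full faithfulness gives $\End(FZ_\lambda)\cong\End(Z_\lambda)$ local, so $FZ_\lambda$ is indecomposable, and $\udim FZ_\lambda=3\delta$ forces it to be regular. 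The modules $FR_\lambda$ are pairwise non-isomorphic indecomposables of dimension $\delta$, while only finitely many indecomposables of dimension $\delta$ lie in the finitely many exceptional tubes; hence for all but finitely many $\lambda$ the module $FR_\lambda$ is a regular simple in a homogeneous tube. For such $\lambda$, $FZ_\lambda$ is an indecomposable regular module all of whose regular composition factors equal the homogeneous simple $FR_\lambda$, and so it lies in that homogeneous tube.

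Finally, choosing $(b,a)=(1,2)$ in the matching labelling, the isomorphism of Lemma \ref{schofield} identifies $\Gr_\ue(FZ_\lambda)$ with $\Gr_{(1,2)}(Z_\lambda)$, which is singular as a variety by Corollary \ref{cor: singular quiver Grassmannians in homogeneous tubes}. As this is an isomorphism of varieties, $\Gr_\ue(FZ_\lambda)$ is singular and $FZ_\lambda$ lies in a homogeneous tube, which is the assertion of the theorem. I expect the main obstacle to be the first step: checking uniformly across types $\tilde A$, $\tilde D$ and $\tilde E$ that the exceptional pair with disjoint support summing to $\delta$ exists and splits the two-dimensional total extension space in the directed way $0+2$. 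The numerics are forced by the Euler form; it is the perpendicularity, controlled here by the source/sink behaviour at the chosen vertex, that is the delicate point to get right in every case.
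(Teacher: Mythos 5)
Your proof is correct and follows essentially the same route as the paper's: both reduce to the Kronecker case by building an exceptional pair with disjoint supports summing to $\delta$ (the paper takes $S_q$ and $X_{\delta-e_q}$ for a source or sink $q$ with $\delta_q=1$), apply Lemma \ref{schofield} to transport the singular $\Gr_{(1,2)}$ of a $(3,3)$-dimensional Kronecker representation from Corollary \ref{cor: singular quiver Grassmannians in homogeneous tubes}, and use a finitely-many-exceptional-tubes argument to ensure the image lies in a homogeneous tube. The only cosmetic differences are that the paper handles type $\widetilde A$ uniformly (any source or sink there has $\delta_q=1$) instead of splitting the cycle into two arcs, and it deduces homogeneity directly from the $\P^1$-family of images of the $(3,3)$-dimensional indecomposables rather than via the regular composition factors $FR_\lambda$.
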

\begin{proof}If $Q=K(2)$, then this is Corollary \ref{cor: singular quiver Grassmannians in homogeneous tubes}. Thus assume $|Q|\geq 3$ and let $\delta$ be the unique imaginary Schur root of $Q$. Then there exists at least one source or sink $q\in Q_0$ with $\delta_q=1$, see for instance \cite[Section 4]{Crawley-Boevey92} for a list of the imaginary Schur roots of extended Dynkin quivers. Denote by $e_q$ the corresponding simple root and by $S_q$ the simple representation corresponding to $q$. It is straightforward to check that $\alpha:=\delta-e_q$ is a root of the corresponding Dynkin quiver which is clearly exceptional as a root of a Dynkin quiver. Let $X_\alpha$ be the exceptional representation of dimension $\alpha$. Then we have $[X_\alpha,S_q]=[S_q,X_\alpha]=0$ because $\mathrm{supp}(X_\alpha)\cap\mathrm{supp}(S_q)=\emptyset$. Depending on the orientation of $Q$ and as we have $\sum_{q'\in N_q}\delta_{q'}=2$, it follows that $\dim\Ext(X_\alpha,S_q)=2$ or $\dim\Ext(S_q,X_\alpha)=2$. Thus the functor $F:\Rep(K(2))\to\Rep(Q)$, restricted to representations of dimension $(3,3)$, induces a $\P^1$-family of non-isomorphic indecomposable representations of dimension $3\delta$. In particular, the image of $F$ contains representations of dimension $3\delta$ which lie in a homogeneous tube. By Lemma \ref{schofield}, we have 
$$\Gr_{\alpha+2e_q}(X_{3\delta})\cong \Gr_{(1,2)}(X_{(3,3)}).$$
As this quiver Grassmannian is singular by Corollary \ref{cor: singular quiver Grassmannians in homogeneous tubes}, the result follows.
\end{proof}

%%%%%%%%%%%%%%%%%%%%%%%%%%%%%%%%%%%%%%%%%%%%%%%%%%%%%%%%%%%%%%%%%%%%%%%%%%%%%%%%%%%%%%%%%%%%%%%%%%%%%%%%%%%%%%%%%%%%%%%%%%%%%%%%%%%%%%%%%%%%%%%%%%%%%%%%%%%%%%%%%%%%%
%%%%%%%%%%%%%%%%%%%%%%%%%%%%%%%%%%%%%%%%%%%%%%%%%%%%%%%%%%%%%%%%%%%%%%%%%%%%%%%%%%%%%%%%%%%%%%%%%%%%%%%%%%%%%%%%%%%%%%%%%%%%%%%%%%%%%%%%%%%%%%%%%%%%%%%%%%%%%%%%%%%%%

\section{Negative Euler characteristics for wild quivers}

%%%%%%%%%%%%%%%%%%%%%%%%%%%%%%%%%%%%%%%%%%%%%%%%%%%%%%%%%%%%%%%%%%%%%%%%%%%%%%%%%%%%%%%%%%%%%%%%%%%%%%%%%%%%%%%%%%%%%%%%%%%%%%%%%%%%%%%%%%%%%%%%%%%%%%%%%%%%%%%%%%%%%

\subsection{Generalized Kronecker quiver}

The case of wild Kronecker quivers is based on the following theorem by Hille.

\begin{thm}[{\cite[Thm.\ 1.2]{Hille15}}]\label{thm: Hille}
 Let $n\geq1$ and $Q$ the $n$-Kronecker quiver. Then every projective subscheme of $\P^{n-1}$ is isomorphic to the quiver Grassmannian $\Gr_\ue(X)$ for some representation $X$ and some dimension vector $\ue$ of $Q$.
\end{thm}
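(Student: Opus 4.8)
The plan is to realize a given closed subscheme $Z\subseteq\P^{n-1}$ directly through the Plücker embedding of the quiver Grassmannian. Write $V=\C^n$ and identify the $n$ arrows of $K(n)$ with a basis of the linear forms on $\P^{n-1}=\Gr(1,V)$. I would look for a representation $X$ of $K(n)$ and a dimension vector $\ue$ with $\ue_0=1$ and $X_0=V$, so that the embedding $\Gr_\ue(X)\hookrightarrow\Gr(1,X_0)\times\Gr(\ue_1,X_1)=\P^{n-1}\times\Gr(\ue_1,X_1)$ exhibits $\Gr_\ue(X)$ as the graph of the subrepresentation conditions over the first factor. A point of the first factor is a line $U_0=\langle v\rangle\subseteq V$, i.e.\ a point $[v]\in\P^{n-1}$; the task is to choose the structure maps $M_1,\dots,M_n\colon X_0\to X_1$ and the auxiliary dimension $\ue_1$ so that the fibre over $[v]$ is nonempty exactly when $[v]\in Z$, with the correct scheme structure.

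The key mechanism is that the defining condition $M_iU_0\subseteq U_1$ for all $i$ is \emph{bilinear} in the homogeneous coordinates $v$ of the point and the Plücker coordinates of $U_1$: membership $M_i v\in U_1$ is linear in $U_1$ and linear in $v$. Eliminating the Plücker coordinates of the auxiliary subspace $U_1$ from this bilinear system yields a homogeneous ideal in the coordinates of $\P^{n-1}$, and the construction succeeds once this elimination ideal equals $\cI_Z$. To engineer this, I would build $X_1$ from successive graded pieces of a truncated polynomial (symmetric) algebra with $M_i$ given by multiplication by $x_i$; climbing through the layers of $U_1$ then realizes the passage to higher-degree Veronese monomials, so that a single linear condition imposed at the top layer encodes a prescribed degree-$d$ generator of $\cI_Z$. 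Equivalently, and more conceptually, one passes to the sheaf $M=\sum_iM_i\,x_i\colon\cO(-1)^{a_0}\to\cO^{a_1}$ associated with $X$ and uses that, for $d\gg0$, a sufficiently regular twist $\cO_Z(d)$ has a linear presentation $\cO(-1)^{a_0}\xrightarrow{\,M\,}\cO^{a_1}\to\cO_Z(d)\to0$; since Fitting ideals are twist-invariant and $\operatorname{Fitt}_0(\cO_Z)=\cI_Z$, the degeneracy scheme of $M$ recovers $Z$.

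The main obstacle is matching the \emph{scheme} structure, not merely the underlying set: one must verify that the elimination ideal (respectively the Fitting ideal of the presentation) equals $\cI_Z$ on the nose, so that nonreduced subschemes are also captured, and one must carry this out uniformly over arbitrary ideals and arbitrary degrees rather than for a single hypersurface. I would settle this by computing on the functor of points, checking that the schematic intersection defining $\Gr_\ue(X)$ inside $\P^{n-1}\times\Gr(\ue_1,X_1)$ projects isomorphically onto $Z$; the delicate point is controlling how the auxiliary coordinates are eliminated without losing or gaining infinitesimal structure. This is precisely the type of explicit coordinate bookkeeping carried out in Theorem \ref{thm: singular quiver Grassmannians for the Kronecker quiver}, and the uniform encoding of equations into a representation is in the same spirit as the embedding $\Gr_{(b,a)}(Z)\cong\Gr_\ue(FZ)$ of Lemma \ref{schofield}.
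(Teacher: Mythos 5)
First, a point of reference: the paper itself contains no proof of Theorem \ref{thm: Hille} --- the statement is imported verbatim from \cite{Hille15} and used as a black box --- so your attempt has to be measured against the Reineke--Hille construction (\cite{Reineke13}, \cite{Hille15}) rather than against anything in this text. Measured that way, your central mechanism does not function. In a representation of $K(n)$ every arrow goes from vertex $0$ to vertex $1$, so the only condition defining a subrepresentation $(U_0,U_1)$ is the one-step condition $M_iU_0\subseteq U_1$; there are no arrows out of vertex $1$. If you build $X_1$ as a tower of graded pieces $S_2\oplus\dotsb\oplus S_d$ of $S=\C[x_1,\dotsc,x_n]$, the components of $U_1$ in the layers of degree greater than $2$ are completely unconstrained: nothing ``climbs'', and a linear condition imposed at the top layer imposes no condition at all on $U_0$. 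What your setup with $X_0=V$ and $\ue_0=1$ actually produces is the following: the image of $\Gr_\ue(X)$ in $\P(X_0)$ is the locus where the $a_1\times n$ matrix of linear forms $(M_1v\,|\,\dotsb\,|\,M_nv)$ has rank at most $\ue_1$, i.e.\ a determinantal scheme cut out by minors, hence by equations concentrated in the single degree $\ue_1+1$; and the fibres of $\Gr_\ue(X)\to\P(X_0)$ are Grassmannians, which are reduced points only where the rank equals $\ue_1$ exactly. To conclude, you would need every subscheme $Z\subseteq\P^{n-1}$ --- whose saturated ideal in general requires generators in several degrees --- to be such a constant-rank determinantal locus with the correct saturation. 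This is precisely the ``delicate point'' you defer, and it is not bookkeeping: it is the entire problem.

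Your sheaf-theoretic variant contains the right ingredients (one large degree, regularity, Fitting ideals, multiplication by the variables) but does not close this gap, and it is incompatible with the first half of your plan. In a linear presentation $\cO(-1)^{a_0}\to\cO^{a_1}\to\cO_Z(d)\to0$ the number $a_0$ is the number of linear syzygies of the truncated module, which you cannot prescribe to be $n$: already for $Z$ the union of two reduced points in $\P^2$ one finds $a_0=4>3$ for every $d\geq\mathrm{reg}$, so $X_0=V$ is unavailable. Moreover, the degeneracy scheme of $M=\sum_iM_ix_i$ lives in the copy of $\P^{n-1}$ whose homogeneous coordinates are dual to the \emph{arrows}, not in $\P(X_0)$, and the statements $\mathrm{coker}(M)\cong\cO_Z(d)$ and $\mathrm{Fitt}_0(\mathrm{coker}\,M)=\cI_Z$ identify $Z$ with that degeneracy scheme --- they do not identify $Z$ with any quiver Grassmannian of $X$, and you never exhibit a dimension vector for which they would. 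The missing bridge is exactly the content of the Reineke--Hille proof, and it is built on the \emph{dual} side: take $X_0=(S/\cI_Z)_d$ and $X_1=(S/\cI_Z)_{d+1}$ with $M_i$ multiplication by $x_i$ and $d$ at least the Castelnuovo--Mumford regularity. A point $z\in Z$ yields evaluation functionals satisfying $\mathrm{ev}_z\circ(x_i\cdot)=z_i\,\mathrm{ev}_z$, hence a subrepresentation of corank $(1,1)$, i.e.\ a point of $\Gr_{(a_0-1,a_1-1)}(X)$; conversely, every corank-$(1,1)$ subrepresentation arises this way, because a functional satisfying these proportionalities is a $d$-th power, i.e.\ a point of the Veronese, and regularity makes the resulting bijection an isomorphism of schemes. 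Note where the miracle happens: only for functionals do the $n$ elements $\mathrm{ev}_z\circ(x_i\cdot)$ span a line, whereas on the primal side where you work, $\mathrm{span}\{x_iv\}$ is $n$-dimensional for every $v\neq0$, which is why your fibres cannot be forced to be points. (Your appeals to Theorem \ref{thm: singular quiver Grassmannians for the Kronecker quiver} and Lemma \ref{schofield} do not supply this bridge either: the former is a coordinate computation in one Schubert cell, and the latter embeds Kronecker quiver Grassmannians into those of other quivers, i.e.\ goes in the opposite direction.)
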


\begin{cor}\label{negativeKronecker}
 Let $n\geq3$ and $Q$ be the $n$-Kronecker quiver. Then every integer can be realized as the Euler characteristic of a quiver Grassmannian of $Q$.
\end{cor}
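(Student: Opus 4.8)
The plan is to use Hille's theorem (Theorem \ref{thm: Hille}) to realize suitable projective schemes as quiver Grassmannians and then to exploit the additivity of the Euler characteristic under disjoint unions. Since $n\geq 3$, every projective subscheme of $\P^{n-1}$ occurs as some $\Gr_\ue(X)$, and for $n=3$ this means every projective subscheme of $\P^2$ is available; for larger $n$ the class is even bigger, so it suffices to treat $n=3$. The key observation is that the Euler characteristic is a topological invariant that is additive on disjoint unions of closed subvarieties and multiplicative on products, so if I can produce projective schemes with \emph{negative} Euler characteristic as well as ones realizing each nonnegative integer, I am done.

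First I would realize the nonnegative integers. A disjoint union of $m$ points in $\P^2$ is a projective subscheme with Euler characteristic $m$, so every positive integer (and $0$, via the empty scheme, or a single point for $1$) is attained; alternatively $\P^1\subset\P^2$ has Euler characteristic $2$ and points can be adjoined to adjust. The real content is the negative integers. Here I would use a smooth projective curve $C$ of genus $g$, whose Euler characteristic is $\chi(C)=2-2g$, which is negative as soon as $g\geq 2$. Smooth plane curves of degree $d$ have genus $g=\binom{d-1}{2}=(d-1)(d-2)/2$, giving the values $\chi=2-(d-1)(d-2)$ for $d=1,2,3,\dots$, namely $2,2,0,-4,-10,\dots$. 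These are embeddable in $\P^2$ and hence realizable as quiver Grassmannians by Theorem \ref{thm: Hille}, but they do not hit every negative integer on the nose.

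To fill the gaps I would take disjoint unions. Given any target negative integer $N$, choose a smooth plane curve $C$ with $\chi(C)=2-2g\leq N$ (possible since $2-2g\to-\infty$), and then form the disjoint union of $C$ with finitely many points to add the nonnegative difference $N-\chi(C)\geq 0$. A disjoint union of a plane curve and a finite set of points is again (isomorphic to) a closed subscheme of $\P^2$ after a suitable linear embedding, so it is realizable by Hille's theorem, and its Euler characteristic is $\chi(C)+(N-\chi(C))=N$ by additivity. This realizes every integer $N$, positive, zero, or negative.

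The main obstacle is the bookkeeping needed to guarantee that these disjoint unions genuinely embed as closed subschemes of $\P^{n-1}$ with the claimed Euler characteristic, i.e.\ that a curve together with disjoint reduced points can be placed inside a single $\P^2$ (or $\P^{n-1}$) without forcing unwanted intersections, and that the topological Euler characteristic of the resulting quiver Grassmannian equals the one computed on the abstract scheme. The first point is harmless: one can position a finite set of points off the curve by a generic choice, and disjoint closed subschemes of $\P^2$ remain closed. The second point relies on the fact that the isomorphism of Theorem \ref{thm: Hille} is an isomorphism of schemes (hence of the underlying varieties, and in particular a homeomorphism in the analytic topology), so the topological Euler characteristic is preserved. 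With these two points in hand the additivity argument completes the proof.
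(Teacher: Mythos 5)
Your proposal is correct and follows essentially the same route as the paper: reduce to $n=3$, take a plane curve of sufficiently negative Euler characteristic, adjoin finitely many disjoint points to hit the exact target value, and invoke Hille's theorem together with additivity of $\chi$. The only difference is that you spell out the source of arbitrarily negative Euler characteristics (smooth plane curves of high degree, $\chi=2-(d-1)(d-2)$), which the paper simply cites as well known.
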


\begin{proof}
 Since every closed subscheme of $\P^2$ can be realized as a closed subscheme of $\P^{n-1}$ for $n\geq3$, it suffices to prove the theorem for $n=3$.
 
 It is well-known that there are curves of arbitrarily negative Euler characteristic in $\P^2$. Let $k$ be an integer and $X$ a curve with Euler characteristic $\chi(X)\leq k$. Define $Y$ as the disjoint union of $X$ with $k-\chi(X)$ points in $\P^n$. By Theorem \ref{thm: Hille}, $Y\simeq\Gr_\ue(X)$ for some representation $X$ and some dimension vector $\ue$ of $Q$, and thus $\chi\bigl(\Gr_\ue(X)\bigr)=\chi(Y)=k$ as desired.
\end{proof}

\subsection{Minimal wild quivers}
In this section, we show that, for every minimal wild quiver, there exists an indecomposable representation $X$ and a dimension vector $e$ such that $\chi(\Gr_e(X))<0$. The idea is to combine Schofield induction and the Caldero-Chapoton map for quiver Grassmannians. The following fact is easily deduced from the well-understood representation theory of extended Dynkin quivers:
\begin{lemma}\label{roots}

Let $\alpha$ be a preprojective root of an extended Dynkin quiver and let $\delta$  be the unique imaginary Schur root. 
\begin{enumerate}
\item Then there exists an $n\in\N$ such that $n\delta <\alpha <(n+1)\delta$.
\item The dimension vector $(n+1)\delta-\alpha$ is a preinjective root.
\item The dimension vector $\alpha+n\delta$ is a preprojective root for all $n\in\N$.
\end{enumerate}
\end{lemma}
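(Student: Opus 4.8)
The plan is to reduce all three statements to elementary arithmetic of the quadratic form $q(\gamma)=\Sc{\gamma}{\gamma}$ on $\Z Q_0$, using three standard facts about the root system of an extended Dynkin quiver: (i) $\delta$ spans the radical of the symmetrized Euler form, so $\Sc{\delta}{\gamma}+\Sc{\gamma}{\delta}=0$ for all $\gamma$ and $q(\delta)=0$; (ii) an integral vector $\gamma\neq0$ is a real root precisely when $q(\gamma)=1$, and every real root is either positive ($\gamma\geq0$) or negative ($\gamma\leq0$) coordinatewise; (iii) a positive real root is preprojective, regular or preinjective according as its defect $\Sc{\delta}{\gamma}$ is negative, zero or positive, while $\delta$ itself has strictly positive coordinates. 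Thus a preprojective root is exactly a positive real root $\alpha$ with $\Sc{\delta}{\alpha}<0$, equivalently $\Sc{\alpha}{\delta}>0$.

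I would first dispatch (3), which also fixes the arithmetic used everywhere else. By (i), $q(\alpha+n\delta)=q(\alpha)+n\big(\Sc{\alpha}{\delta}+\Sc{\delta}{\alpha}\big)+n^2q(\delta)=q(\alpha)=1$, so by (ii) the vector $\alpha+n\delta$ is a real root; having nonnegative coordinates and being nonzero, it is a positive root; and $\Sc{\delta}{\alpha+n\delta}=\Sc{\delta}{\alpha}<0$, so by (iii) it is preprojective. For (1), since $\delta>0$ the set $\{m\in\N\mid m\delta\leq\alpha\}$ is finite and contains $0$; let $n$ be its maximum, so that $n\delta\leq\alpha$ and $(n+1)\delta\not\leq\alpha$. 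This already yields the lower inequality $n\delta<\alpha$, strict because the real root $\alpha$ cannot equal the imaginary vector $n\delta$.

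The upper inequality and statement (2) I would then obtain in one stroke by examining $\gamma:=(n+1)\delta-\alpha$. The same computation gives $q(\gamma)=q(\alpha)=1$, and $\gamma\neq0$ since $\alpha$ is not imaginary, so by (ii) $\gamma$ is a real root and is therefore either $\geq0$ or $\leq0$. But $\gamma\leq0$ would mean $(n+1)\delta\leq\alpha$, contradicting the maximality of $n$; hence $\gamma\geq0$, i.e. $\alpha\leq(n+1)\delta$, again strict because $\alpha\neq(n+1)\delta$. This completes (1). Finally $\Sc{\delta}{\gamma}=-\Sc{\delta}{\alpha}>0$, so $\gamma$ is a positive real root of positive defect, hence a preinjective root by (iii), which is exactly (2).

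The point worth emphasizing is that there is no genuine computational obstacle: once the standard dictionary (i)--(iii) between roots, the quadratic form and the preprojective/regular/preinjective trichotomy is in place, the whole lemma is a one-line manipulation of $q$ together with the positive/negative dichotomy for real roots. The only step requiring real care is the simultaneous derivation of the upper bound in (1) and of (2) from the single vector $\gamma=(n+1)\delta-\alpha$: it is precisely the maximality defining $n$ that forbids $\gamma\leq0$ and thereby forces $\gamma$ to be the preinjective root predicted by the defect calculation.
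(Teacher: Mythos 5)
Your proof is correct. For comparison: the paper does not actually prove this lemma---it is stated only with the remark that it ``is easily deduced from the well-understood representation theory of extended Dynkin quivers''---so your argument supplies what the authors leave to the reader, and it does so along the standard route: the Tits form $q$, its radical $\Z\delta$, the characterization of real roots by $q(\gamma)=1$ together with the positive/negative dichotomy, and the defect trichotomy. You also handle the one genuinely delicate point properly: since the coordinatewise order on $\Z Q_0$ is not total, the maximality of $n$ by itself does not yield $\alpha\le(n+1)\delta$; it is the sign dichotomy applied to $\gamma=(n+1)\delta-\alpha$ (a real root because $q(\gamma)=1$, and nonzero) that rules out $\gamma\le 0$ and hence forces $\gamma\ge 0$, after which the same vector gives part (2) via its positive defect $\Sc{\delta}{\gamma}=-\Sc{\delta}{\alpha}>0$. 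One remark on reading the statement: the inequalities in (1) must be understood as $\le$ together with $\ne$, not as strict coordinatewise inequalities---already for the Kronecker quiver $\widetilde A_1$ the preprojective root $(n,n+1)$ agrees with $n\delta=(n,n)$ in a coordinate---and this is exactly the reading your proof establishes, which is also all that the paper's application of the lemma (in Proposition \ref{prop: roots with large extension for minimal wild quiver}) requires.
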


\begin{prop} \label{prop: roots with large extension for minimal wild quiver}
 For every minimal wild quiver with at least $3$ vertices and every $m\geq 1$, there exist two exceptional roots $\alpha$ and $\beta$ such that $\mathrm{supp}(\alpha)\cap\mathrm{supp}(\beta)=\emptyset$, $\alpha\in\beta^\perp$,  $\hom(\alpha,\beta)=0$ and $\mathrm{ext}(\alpha,\beta)\geq m$.
\end{prop}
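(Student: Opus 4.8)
## Proof Proposal

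The plan is to establish the proposition by first reducing to a finite, explicit list of quivers, and then producing the required pair of exceptional roots on each one. Since every minimal wild quiver with at least three vertices belongs to a well-understood finite collection (the minimal wild quivers of Unger's classification), I would begin by noting that many of these quivers contain an extended Dynkin subquiver $\widetilde Q$ on a proper subset of the vertices. The strategy is to use the representation theory of $\widetilde Q$---already summarized in the earlier sections---to manufacture one of the two exceptional roots, while using a vertex outside $\supp(\widetilde Q)$, or a second disjoint exceptional representation, to manufacture the other.

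The key steps I would carry out are as follows. First, I would fix a minimal wild quiver $Q$ and locate within it a tame (extended Dynkin) full subquiver $\widetilde Q$, together with at least one further vertex. By Lemma \ref{roots}, the extended Dynkin subquiver supplies a large supply of preprojective roots $\alpha$ with $n\delta < \alpha < (n+1)\delta$, and these are exceptional since preprojective roots of tame quivers are real Schur roots. Second, I would choose $\beta$ to be an exceptional root supported on the complementary vertices, so that $\supp(\alpha)\cap\supp(\beta)=\emptyset$ holds by construction; disjoint support immediately forces $\hom(\alpha,\beta)=0$ and $\hom(\beta,\alpha)=0$, and then the condition $\alpha\in\beta^\perp$ reduces to checking $\ext(\beta,\alpha)=0$ (or the symmetric variant), which one reads off from the orientation of the connecting arrows. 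Third, and most importantly, I would force the extension dimension to be large: because $\supp(\alpha)\cap\supp(\beta)=\emptyset$, the Euler form gives
\[
 \ext(\alpha,\beta) - \hom(\alpha,\beta) \ = \ -\Sc{\alpha}{\beta} \ = \ \sum_{v\in Q_1}\alpha_{s(v)}\beta_{t(v)},
\]
so with $\hom(\alpha,\beta)=0$ we get $\ext(\alpha,\beta)=\sum_{v}\alpha_{s(v)}\beta_{t(v)}$, a quantity that grows linearly as we replace $\alpha$ by $\alpha+k\delta$ for increasing $k$ (using part (3) of Lemma \ref{roots} to keep $\alpha$ a preprojective, hence exceptional, root). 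Choosing $k$ large enough then yields $\ext(\alpha,\beta)\geq m$ for any prescribed $m$, while the support, $\hom$-vanishing, and orthogonality conditions are preserved.

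The main obstacle I anticipate is twofold. The delicate point is ensuring that enlarging $\alpha$ to $\alpha+k\delta$ keeps $\alpha$ exceptional \emph{and} keeps $\hom(\alpha,\beta)=0$ simultaneously: while disjoint support guarantees $\hom$-vanishing at the level of the connecting arrows, I must verify that the orientation of the arrows between $\supp(\alpha)$ and $\supp(\beta)$ all point in one direction (say, from $\supp(\alpha)$ into $\supp(\beta)$), since a mixture would contribute to both $\hom$ and $\ext$ and could spoil the clean formula above. The second obstacle is genuinely casework: the classification of minimal wild quivers yields several distinct shapes, and a small number of them may not contain a convenient extended Dynkin subquiver with a spare vertex of the right orientation. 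For those exceptional cases I would handle each quiver by hand, exhibiting $\alpha$ and $\beta$ explicitly and checking the four conditions directly; the later result Proposition \ref{prop: roots with large extension for minimal wild quiver} together with the reduction that every quiver Grassmannian of a generalized Kronecker quiver arises from a fixed minimal wild quiver is precisely what makes this finite check worthwhile, as it then feeds into Lemma \ref{schofield} to transport the negative Euler characteristics of Corollary \ref{negativeKronecker} onto each minimal wild quiver.
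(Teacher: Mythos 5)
Your proposal is correct and takes essentially the same route as the paper: a preprojective root of an extended Dynkin full subquiver, made large by adding multiples of $\delta$ (Lemma \ref{roots}), paired against a disjointly supported exceptional root, with disjoint supports turning $-\Sc{\alpha}{\beta}$ into $\mathrm{ext}(\alpha,\beta)\geq m$. Both obstacles you flag are resolved in the paper not by classification casework but by the structural fact that a minimal wild quiver arises from an extended Dynkin quiver by adding a single arrow; in each resulting case (new vertex, doubled arrow, or newly created cycle) one finds an extended Dynkin full subquiver attached to exactly one additional vertex by exactly one arrow, so the paper takes $\beta$ to be the simple root at that vertex, the single connecting arrow rules out mixed orientations, and the orientation of that arrow merely decides whether the required pair is $(\alpha,s_q)$ or $(s_q,\alpha)$.
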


\begin{proof}
 If $\hat Q$ is minimal wild with at least $3$ vertices, there exists an extended Dynkin quiver $Q$ such that $\hat Q$ is obtained by either adding an arrow between an existing vertex and a new vertex or by adding an arrow between two existing vertices. 

 In the first case, we can decompose $\hat Q_0= Q_0\cup \{q\}$ where $ Q$ is an extended Dynkin quiver. Moreover, $q$ is connected to a vertex $q'\in Q_0$ by at least one arrow. By Lemma \ref{roots}, there exists a preprojective (and thus exceptional) root $\alpha$ of $Q$ such that $\alpha_q\geq m$. If $s_q$ is the simple root corresponding to $q$, depending on the orientation of the connecting arrows, we either have $\mathrm{ext}(\alpha,s_q)\geq m$ or $\mathrm{ext}(s_q,\alpha)\geq m$. 

 In the second case and if the new arrow is between two vertices which were already connected by an arrow, the quiver $\hat Q$ is forced to have a subquiver of one of the following forms:
 \[
  \bullet \Longrightarrow\bullet\longleftarrow\bullet,\quad \bullet \Longrightarrow\bullet\longrightarrow\bullet,\quad \bullet \longrightarrow\bullet\Longrightarrow\bullet,\quad \text{or} \quad \bullet \longleftarrow\bullet\Longrightarrow\bullet .
 \]
 Thus it has a Kronecker quiver as a subquiver which means that we can apply the argument from above. If the vertices were not connected before, the quiver is forced to have an undirected cycle as the quiver itself was connected before. As $Q$ is of extended Dynkin type, $\hat Q$ cannot be of type $\tilde A_n$. Thus the new quiver has a proper subquiver of type $\tilde A_n$ for some $n\geq 2$ which is connected to an additional vertex. Thus we can apply the argument from above. 
\end{proof}

Combining the results of this section with Corollary \ref{negativeKronecker} we obtain:

\begin{thm} \label{thm: negative Euler characteristics for wild quivers}
 For every wild quiver and every $k\in\Z$, there exists a quiver Grassmannian with Euler characteristic $k$. In particular, there are quiver Grassmannians of $Q$ that do not have a cell decomposition into affine spaces.
\end{thm}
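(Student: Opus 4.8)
The plan is to assemble the pieces already developed in the text and reduce everything to the generalized Kronecker case treated in Corollary \ref{negativeKronecker}. First I would recall that a wild quiver either contains a minimal wild quiver as a subquiver or is itself minimal wild, so by the standard restriction/extension of representations along a full subquiver (which preserves quiver Grassmannians, since a representation of the subquiver extends by zero), it suffices to prove the claim for minimal wild quivers. A minimal wild quiver either is a generalized Kronecker quiver $K(n)$ with $n\geq 3$, or has at least three vertices. In the former case, Corollary \ref{negativeKronecker} directly gives that every integer occurs as an Euler characteristic, so nothing more is needed.

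The substantive case is a minimal wild quiver $\hat Q$ with at least three vertices. Here I would invoke Proposition \ref{prop: roots with large extension for minimal wild quiver} to produce, for any desired $m$, two exceptional roots $\alpha,\beta$ with disjoint support, $\alpha\in\beta^\perp$, $\hom(\alpha,\beta)=0$ and $\mathrm{ext}(\alpha,\beta)\geq m$. Letting $X_\alpha,X_\beta$ denote the corresponding exceptional representations, the hypotheses of Lemma \ref{schofield} are met with $K(m')$ where $m'=\mathrm{ext}(\alpha,\beta)\geq m\geq 3$. That lemma then furnishes a fully faithful functor $F\colon\Rep(K(m'))\to\Rep(\hat Q)$ together with isomorphisms $\Gr_\ue(FZ)\cong\Gr_{(b,a)}(Z)$ for all $Z$, where $\ue=a\cdot\udim X_\alpha+b\cdot\udim X_\beta$. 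Since these are isomorphisms of varieties, they preserve Euler characteristics.

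Now I would combine this with Corollary \ref{negativeKronecker}: given any target integer $k\in\Z$, choose $m'\geq 3$ and a representation $Z$ of $K(m')$ whose quiver Grassmannian $\Gr_{(b,a)}(Z)$ has Euler characteristic $k$, which exists by that corollary. Transporting through $F$ yields a representation $FZ$ of $\hat Q$ and a dimension vector $\ue$ with $\chi(\Gr_\ue(FZ))=\chi(\Gr_{(b,a)}(Z))=k$. This realizes every integer as an Euler characteristic of a quiver Grassmannian for $\hat Q$, and hence for the original wild quiver after restricting scalars back up along the subquiver inclusion. The final sentence of the theorem is then immediate from Remark 1 in the introduction: a cell decomposition into affine spaces forces the Euler characteristic to be nonnegative, so any quiver Grassmannian whose Euler characteristic is negative cannot admit such a decomposition.

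The main obstacle I anticipate is bookkeeping rather than conceptual difficulty: one must be careful that the dimension vector $\ue$ corresponding to $Z$ under $F$ is genuinely controlled, and that when $\hat Q$ is only a subquiver of the given wild quiver, the extension-by-zero of $FZ$ really does give a representation whose quiver Grassmannian at the induced dimension vector agrees with $\Gr_\ue(FZ)$ — this requires that the extra vertices and arrows carry the zero space, so that no new subrepresentations are introduced. Checking that this reduction is faithful, and that the list produced in Proposition \ref{prop: roots with large extension for minimal wild quiver} indeed exhausts the minimal wild quivers with at least three vertices up to the generalized Kronecker case, is where the care is needed; the Euler characteristic transport itself is formal once Lemma \ref{schofield} and Corollary \ref{negativeKronecker} are in hand.
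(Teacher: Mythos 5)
Your proposal is correct and assembles exactly the same ingredients, in the same way, as the paper's (implicit) proof: reduction to minimal wild quivers via extension by zero, Corollary \ref{negativeKronecker} for the generalized Kronecker case, and Proposition \ref{prop: roots with large extension for minimal wild quiver} fed into Lemma \ref{schofield} (with $Y=X_\alpha$, $X=X_\beta$ and $m'=\mathrm{ext}(\alpha,\beta)\geq 3$) to transport arbitrary Euler characteristics to minimal wild quivers with at least three vertices, with the final claim following from the nonnegativity of Euler characteristics forced by cell decompositions. No gaps or genuine differences to report.
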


\begin{cor}\label{cor: singular quiver Grassmannian for wild quivers}
 Every wild quiver has quiver Grassmannians with singularities.
\end{cor}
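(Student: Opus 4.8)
The plan is to run the very same reduction used in the proof of Theorem \ref{thm: negative Euler characteristics for wild quivers}, but to keep track of singularities rather than of Euler characteristics. The starting observation is that if $Q'$ is a full subquiver of the wild quiver $Q$ and $X'\in\Rep(Q')$, then extending $X'$ by zero vector spaces and zero maps on the remaining vertices yields a representation $X\in\Rep(Q)$ with $\Gr_{\ue'}(X')\cong\Gr_{\ue}(X)$ as varieties, where $\ue$ extends $\ue'$ by zeros; indeed any subrepresentation of $X$ must vanish on the extra vertices. Since every wild quiver contains a minimal wild quiver as a full subquiver, it therefore suffices to exhibit, for every minimal wild quiver, a quiver Grassmannian that is singular as a variety.

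Next I would split into the two shapes of minimal wild quivers. If the minimal wild quiver is a generalized Kronecker quiver $K(n)$ with $n\geq 3$, then Theorem \ref{thm: Hille} realizes every projective subscheme of $\P^{n-1}$ as a quiver Grassmannian of $K(n)$. As $n\geq 3$ we have $\P^2\subseteq\P^{n-1}$, and $\P^2$ contains reduced singular curves (for instance the cuspidal cubic $y^2z=x^3$); realizing such a curve produces a quiver Grassmannian of $K(n)$ that is singular as a variety. If instead the minimal wild quiver $\hat Q$ has at least three vertices, I would invoke Proposition \ref{prop: roots with large extension for minimal wild quiver} with $m=3$ to obtain exceptional roots $\alpha,\beta$ with $\supp(\alpha)\cap\supp(\beta)=\emptyset$, $\alpha\in\beta^\perp$, $\hom(\alpha,\beta)=0$ and $\mathrm{ext}(\alpha,\beta)\geq 3$. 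Applying Lemma \ref{schofield} to the associated exceptional representations $X_\beta$ and $X_\alpha$ (in the roles of $X$ and $Y$, with $m=\dim\Ext(X_\alpha,X_\beta)\geq 3$) furnishes a fully faithful functor $F\colon\Rep(K(m))\to\Rep(\hat Q)$ together with isomorphisms $\Gr_{\ue}(FZ)\cong\Gr_{(b,a)}(Z)$ of varieties for all $Z\in\Rep(K(m))$, where $\ue=a\beta+b\alpha$.

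It then remains to feed in a singular source. By the first case, $K(m)$ with $m\geq 3$ admits a representation $Z$ and a dimension vector $(b,a)$ with $\Gr_{(b,a)}(Z)$ singular as a variety; choosing $\ue=a\beta+b\alpha$ accordingly, the transported quiver Grassmannian $\Gr_{\ue}(FZ)$ of $\hat Q$ is singular as well, and pulling back along the subquiver inclusion yields a singular quiver Grassmannian for $Q$.

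The main obstacle, and the point that needs the most care, is to ensure that singularity genuinely transfers: every reduction step---extension by zero along a full subquiver, and the comparison map of Lemma \ref{schofield} built from Schofield induction---must be an isomorphism of varieties, not merely a bijection on points, so that smooth and singular loci correspond. A related subtlety is that Theorem \ref{thm: Hille} produces arbitrary projective schemes, so one must select a \emph{reduced} singular model (a plane curve with a node or cusp) in order to guarantee a singularity of the underlying variety rather than only a non-reduced scheme structure.
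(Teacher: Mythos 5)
Your proposal follows the same route as the paper's own (very terse) proof, which says exactly that singular closed curves in $\P^2$ exist and then invokes the machinery behind Corollary \ref{negativeKronecker} and Theorem \ref{thm: negative Euler characteristics for wild quivers}: Hille's Theorem \ref{thm: Hille} for $K(n)$, $n\geq 3$, transported to minimal wild quivers with at least three vertices via Proposition \ref{prop: roots with large extension for minimal wild quiver} and Lemma \ref{schofield}. Your case split, your use of $X_\beta$ and $X_\alpha$ in the roles of $X$ and $Y$ of Lemma \ref{schofield} (which is consistent: $\alpha\in\beta^\perp$ is the same as $X_\beta\in{}^\perp X_\alpha$), and your insistence on a \emph{reduced} singular curve and on isomorphisms of varieties rather than bijections of points are precisely the details the paper leaves implicit (compare the remark following the corollary in the paper).

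There is, however, one step that is false as stated: it is not true that every wild quiver contains a minimal wild quiver as a \emph{full} subquiver. Take $Q$ with vertices $a,b,c$, two arrows $a\to b$ and two arrows $b\to c$. This $Q$ is wild, since $\Sc{d}{d}=-1$ for $d=(1,1,1)$; but its only full subquivers are two copies of the tame Kronecker quiver $K(2)$, a disconnected pair of vertices, and $Q$ itself, and $Q$ is not minimal wild because deleting one of the arrows $b\to c$ still leaves a wild quiver. (Similarly, $K(4)$ has no minimal wild full subquiver.) So for such $Q$ your reduction produces nothing. The gap is harmless because fullness is not needed: extension by zero works for an arbitrary subquiver $Q'\subseteq Q$ if one assigns the zero map to every arrow of $Q$ not in $Q'$; the subrepresentation condition along a zero map is vacuous, so one still gets $\Gr_{\ue'}(X')\cong\Gr_\ue(X)$ as varieties (indeed as schemes), where $\ue$ extends $\ue'$ by zeros. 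With \emph{full} deleted and this one-line justification added, the reduction to minimal wild quivers---which every wild quiver does contain as a subquiver---is correct, and the rest of your argument goes through as written.
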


\begin{proof}
 Since there exist singular closed curves in $\P^2$, this follows immediately from Theorems \ref{negativeKronecker} and \ref{thm: negative Euler characteristics for wild quivers}.
\end{proof}

\begin{rem}
 The proof of Proposition \ref{prop: roots with large extension for minimal wild quiver} implies actually the stronger statement that for any wild acyclic quiver $Q$ with at least $3$ vertices, any $m\geq1$, any representation $X$ of the $m$-Kronecker quiver and any dimension vector $\ue$ for the Kronecker quiver, the quiver Grassmannian $\Gr_\ue(X)$ is isomorphic to a quiver Grassmannian for $Q$. Combining this with Theorem \ref{thm: Hille} shows that every projective scheme is isomorphic to a quiver Grassmannian for $Q$.
 
 This result is the theme of the recent paper \cite{Ringel17} of Ringel that was proven independently from the present work. One of the main ideas of Ringel is comparable to the one of Lemma \ref{schofield}. Under certain additional assumptions on $X$ and $Y$, he extends Lemma \ref{schofield} to generalized Kronecker quivers in special case $(b,a)=(1,1)$.
\end{rem}

\begin{small}

\bibliographystyle{plain}
\def\cprime{$'$}

\end{small}

\end{document}